 \pgfplotsset{compat=newest}
\theoremstyle{plain}
\newtheorem{theorem}{Theorem}[section]
\newtheorem{lemma}[theorem]{Lemma}
\newtheorem{corollary}[theorem]{Corollary}
\newtheorem{proposition}[theorem]{Proposition}
\theoremstyle{definition}
\newtheorem{definition}[theorem]{Definition}
\newtheorem{example}[theorem]{Example}
\theoremstyle{remark}
\newtheorem{remark}{Remark}
\newtheorem{algorithm}{Algorithm}
\def\diam{\operatorname{diam}}
\newcommand{\argmin}{\rm argmin}
\newcommand{\R}{\mathbb R}
\newcommand{\N}{\mathbb N}
\newcommand{\C}{{\cal C}}
\def \T {{\scriptscriptstyle\mathrm{T}}} 
\def \T {{\scriptscriptstyle\mathrm{T}}} 
\def\R{\mathbb R}
\begin{document}
\title{Frank--Wolfe algorithms for piecewise star-convex functions with a nonsmooth  difference-of-convex structure}

\author{
R. D\'iaz Mill\'an \thanks{School of Information Technology, Deakin University, Melbourne, Australia, e-mail:\url{r.diazmillan@deakin.edu.au}.}
\and
O.  P. Ferreira  \thanks{IME, Universidade Federal de Goi\'as,  Goi\^ania, Brazil, e-mail:\url{orizon@ufg.br}.}
\and
 J. Ugon  \thanks{School of Information Technology, Deakin University, Melbourne,   Australia,  e-mail:\url{j.ugon@deakin.edu.au}.}}
\maketitle
\begin{abstract}
We study Frank--Wolfe algorithms for minimizing a nonconvex function expressed as the difference of a differentiable convex function with a Lipschitz continuous gradient and a possibly nonsmooth convex function over a compact convex set.
We propose an adaptive Frank--Wolfe scheme that exploits this difference-of-convex structure and prove a sufficient descent property together with the convergence of the Frank--Wolfe gap to zero.
We also introduce the notion of piecewise star-convexity, and show that for this class of functions Clarke-stationary points are minimizers on the cells of the underlying partition and, for finite partitions, we establish $\mathcal{O}(1/k)$ complexity bounds in function value and in the Frank--Wolfe gap.
In particular, these bounds match the standard rates obtained in the convex case, showing that Frank--Wolfe methods can retain optimal complexity guarantees beyond the classical convex setting, even in the presence of nonsmooth terms.
\end{abstract}

\noindent
{\bf Keywords:} Frank-Wolfe algorithm; DC optimization problem; finite difference; piecewise-star-convex function.

\medskip
\noindent
{\bf AMS subject classification:}   90C25, 90C60, 90C30, 65K05.
\section{Introduction}
The DC optimization method involves minimizing a function that can be represented as the difference between two convex functions. We are interested in finding a solution to a constrained DC optimization problem, where the constraint set ${{\cal C}} \subset {\mathbb R}^n$ is a convex and compact set, and $f:=g-h$ where $g :\mathbb{R}^n \to \mathbb{R}$ are continuously differentiable convex functions, and $h:\mathbb{R}^n \to \mathbb{R}$ is   a convex function possibly non-differentiable.  To the best of our knowledge, DC optimization can be traced back to pioneering works such as \cite{TaoLe1997,Pham1986}, which introduced the first algorithms for this problem. Since then, the DC optimization has attracted the attention of the mathematical programming community (see for example   \cite{Boufi2020,deOliveira2020,Dong2021,LeThietall2018,Ackooi2019}),  not only for its own sake but also because it is an abstract model for several families of practical optimization problems. Although we are not concerned with practical issues at this time, we emphasize that practical applications emerge whenever the natural structure of the problem is modelled as a DC optimization problem, such as sparse generalized eigenvalue problems \cite{torres2011majorization}, sparse optimization problems \cite{Gotoh2018}, facility location and clustering problems \cite{Nam2017}.

The Frank-Wolfe algorithm has a long history, dating back to Frank and Wolfe's work in the 1950s to minimize convex quadratic functions over compact polyhedral sets,  see  \cite{FrankWolfe1956}.  This method was generalized about ten years later to minimize convex differentiable functions with Lipschitz continuous gradients and compact constraint convex sets, see   \cite{LevitinPolyak1966}. Since then, this method has attracted the attention of several researchers who work with continuous optimization, thus becoming also known as the conditional gradient method. One of the factors that explains the interest in this method is its simplicity and ease of implementation. In fact, each method iteration only requires access to a linear minimization oracle over a compact convex set. It is also worth noting that, due to the method's simplicity, it allows for low storage costs and ready exploration of separability and sparsity, making its application in large-scale problems quite  attractive. It is interesting to note that the popularity of this method has increased significantly in recent years as a result of the emergence of several applications in machine learning, see \cite{Jaggi2013, LacosteJaggi2015,Lan2013}.   For all of these reasons, several variants of this method have arisen throughout the years and new properties of it have been discovered, resulting in a large literature on it,  papers  dealing with this method include \cite{BeckTeboulle2004,Bouhamidietall2018, BoydRecht2017,FreundMazumder2017,  Ghadimi2019,HarchaouiNemirovski2015, Konnov2018, LanZhou2016,LussTeboulle2013}.   

In the present paper,   we formulate  a version of  {\it Frank--Wolfe algorithm or the conditional gradient method to solve  DC optimization problem with an adaptive step size}.    It is worth mentioning that the Frank-Wolfe algorithm has previously  been formulated in the context of DC programming, see  \cite{Khamaru2019}.  See also  \cite{Yurtsever2022cccp} which establishes  theoretical connections  between  DC algorithms  and  convex-concave procedures with the Frank-Wolfe algorithm.  In contrast to the previous study, which assume that the  curvature/Lipschitz-type  constant of $f = g-h$ is bounded from above, the analysis of the Frank-Wolfe method done here just assumes that the gradient of the first component $g$ of $f$ is Lipschitz continuous.  While designing the methods, even if we assume that the  gradient of  $g$  is Lipschitz continuous, we will not compute the step size using the Lipschitz constant of $\nabla g$. The step size will be computed adaptively, based on an idea introduced in \cite{Beck2015} (see also \cite{BeckTeboulle2009, PedregosaJaggi2020}) that approximates the Lipschitz constant. It has been shown in previous works  that Frank-Wolfe algorithm produces a stationary point with a convergence rate of only ${\cal O} (1/\sqrt{k})$ for non-convex objective functions,  see \cite{Lacoste2016} (see also \cite{Khamaru2019}). Here, we introduce the concept of   piecewise star-convex function notion, which generalizes the star-convex function concept proposed in \cite{NesterovPolyak2006}  as well as a few other related concepts, such as  \cite{Hinder2020, WangWibsono2023}.  The convergence analysis developed in this paper covers \emph{piecewise star--convex with the  nonsmooth  difference-of-convex structure} aforementined, we show that the proposed Frank--Wolfe scheme equipped with our backtracking stepsize attains the \emph{same} order of iteration complexity  as in the convex case,  both the function-value  and the Frank-Wolfe gap decay at the standard sublinear rate  \(\mathcal{O}(1/k)\).
This holds even though piecewise star--convex functions are, in general, nonconvex and may be nonsmooth across cell boundaries.
Consequently, our theory extends convex-type complexity guarantees for Frank--Wolfe methods to a broader nonconvex setting, including piecewise star--convex DC objectives with a smooth convex term $g$ and a convex, possibly nonsmooth term $h$.
These guarantees complement recent results for   star--convex functions in~\cite{Millan2025}, and indicate that projection-free Frank--Wolfe schemes can maintain the usual first-order complexity behavior beyond classical convexity, even in the presence of nonsmooth terms.

The remainder of the paper is organized as follows. Section~\ref{sec:Preliminares} introduces the notation and collects auxiliary results. 
Section~\ref{secc3} presents the optimization problem, the standing assumptions, and related notation. 
In Section~\ref{Sec:StarConvex} we define \emph{piecewise star--convexity}, establish its main properties, and provide illustrative examples. 
Section~\ref{Sec:FW} is devoted to the proposed Frank--Wolfe algorithm, including its formulation, well-posedness, and the basic properties of the generated sequence.  Section~\ref{Sec:StarConvexConvProp} analyzes the convergence properties of the method, and Section~\ref{Sec:StarConvexIntComplexity} derives the corresponding iteration--complexity bounds. 
Finally, Section~\ref{sec:conclusions} summarizes our conclusions.

\section{Preliminaries} \label{sec:Preliminares}
In this section, we recall   some notations, definitions and basic results used throughout  the paper. A function $\varphi:\mathbb{R}^{n}\to  \mathbb{R} $ is said to be {\it convex}  if $\varphi(\lambda x + (1-\lambda)y)\leq \lambda \varphi(x) + (1-\lambda) \varphi(y)$, for all $x,y\in {\mathbb{R}^{n}}$ and  $\lambda \in [0,1]$.  And  $\varphi$ is \emph{strictly convex}  when the  last inequality is strict for $x\neq y$, for a comprehensive study of convex functions see \cite{Lemarechal}.  We say that $f:\mathbb{R}^{n}\to\mathbb{R}$ is \emph{locally Lipschitz} if, for all $x\in \mathbb{R}^{n}$, there exist a constant ${\cal C} _{x}>0$ and a neighborhood $U_{x}$ of $x$ such that $|f(x)-f(y)|\leq {\cal C} _{x}\|x-y\|$, for all $y\in U_{x}.$ It is well known that, if $f:\mathbb{R}^{n}\to \mathbb{R}$ is convex, then $f$ is locally Lipschitz. And we also know that if $f:\mathbb{R}^{n}\to\mathbb{R}$ is continuously differentiable then $f$ is locally Lipschitz, see {\cite[p. 32]{clarke1983optimization}}. Let $f:\mathbb{R}^{n}\to\mathbb{R}$ be a locally Lipschitz function. The {\emph{Clarke's subdifferential}} of $f$  at  $x\in \mathbb{R}^{n}$ is  given by $\partial^{c} f(x)=\{v \in \mathbb{R}^{n}:~ f^{\circ}(x;d)\geq  v^{\T}d, ~
\forall d \in \mathbb{R}^{n} \},$
where $f^{\circ}(x;d)$ is the {\emph{generalized directional derivative}} of $f$ at $x$ in the direction $d$ given by
$$ f^{\circ}(x;d)=  
\limsup _{ 
\tiny{\begin{array}{c}
u\rightarrow x\\
t\downarrow 0
\end{array}}} \frac{ f(u+td)-f(u) }{t}.
$$ 
For an extensive study of locally Lipschitz functions and Clarke's subdifferential   see \cite[p. 27]{clarke1983optimization}.  If $f$ is convex, then $\partial^{c}f(x)$ coincides with the subdifferential $\partial f(x)$ in the sense of convex analysis, and $f^{\circ}(x;d)$ coincides with the usual directional derivative $f'(x;d)$; see {\cite[p. 36]{clarke1983optimization}}. We recall that if $f:\mathbb{R}^{n}\to  \mathbb{R} $ is differentiable, then $\partial^{c} f(x)=\{\nabla f(x)\}$ for any $x\in \mathbb{R}^{n}$; see {\cite[p. 33]{clarke1983optimization}}. 

\begin{theorem}[{\cite[p. 27]{clarke1983optimization}}] \label{th:cdd}
Let $f:\mathbb{R}^{n}\to\mathbb{R}$ be a locally Lipschitz function. Then,  $\partial^{c}f(x)$ is a nonempty, convex, compact subset of $\mathbb{R}^{n}$ and $\|v\|\leq {\cal C} _{x},$ for all $v\in \partial^{c}f(x)$, where ${\cal C} _{x}>0$ is the Lipschitz constant of $f$ around $x$. Moreover,  $f^{\circ}(x;d) = \max \{ v^{\T}d :~ v\in \partial^{c}f(x)\} $.
\end{theorem}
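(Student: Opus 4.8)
The plan is to establish the properties of $\partial_{c}f(x)$ by first analysing the generalized directional derivative $d\mapsto f^{\circ}(x;d)$ and showing it is a finite sublinear functional (positively homogeneous and subadditive) dominated by $K_{x}\|d\|$. First I would fix a neighborhood $U_{x}$ of $x$ and a constant $K_{x}>0$ with $|f(u)-f(w)|\le K_{x}\|u-w\|$ for all $u,w\in U_{x}$. For the difference quotient in the definition of $f^{\circ}(x;d)$, whenever $u$ is close enough to $x$ and $t>0$ is small enough, both $u$ and $u+td$ lie in $U_{x}$, so $|f(u+td)-f(u)|/t\le K_{x}\|d\|$; passing to the limsup gives $|f^{\circ}(x;d)|\le K_{x}\|d\|$, in particular $f^{\circ}(x;\cdot)$ is real-valued. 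Positive homogeneity, $f^{\circ}(x;\lambda d)=\lambda f^{\circ}(x;d)$ for $\lambda\ge 0$, follows by the substitution $s=\lambda t$ in the difference quotient; subadditivity, $f^{\circ}(x;d_{1}+d_{2})\le f^{\circ}(x;d_{1})+f^{\circ}(x;d_{2})$, follows by splitting $f(u+t(d_{1}+d_{2}))-f(u)=[f(u+td_{1}+td_{2})-f(u+td_{1})]+[f(u+td_{1})-f(u)]$ and using that the limsup of a sum is bounded by the sum of the limsups, together with the observation that $u+td_{1}\to x$ as $u\to x$, $t\downarrow 0$.

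Next I would read off convexity, closedness and the norm bound directly from the definition $\partial_{c}f(x)=\{v\in\R^{n}:\ v^{\T}d\le f^{\circ}(x;d)\ \forall d\in\R^{n}\}$. As an intersection of closed half-spaces it is closed and convex. If $v\in\partial_{c}f(x)$, taking $d=v$ yields $\|v\|^{2}=v^{\T}v\le f^{\circ}(x;v)\le K_{x}\|v\|$, hence $\|v\|\le K_{x}$; so $\partial_{c}f(x)$ is bounded, and therefore compact.

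For nonemptiness and the max formula I would invoke the Hahn--Banach theorem using the sublinearity of $p(\cdot):=f^{\circ}(x;\cdot)$ established above. Fix any $d_{0}\in\R^{n}$. On the line $\{\lambda d_{0}:\lambda\in\R\}$ define the linear functional $\ell(\lambda d_{0})=\lambda f^{\circ}(x;d_{0})$; by positive homogeneity $\ell(\lambda d_{0})=f^{\circ}(x;\lambda d_{0})$ for $\lambda\ge 0$, while for $\lambda<0$ the inequality $0=f^{\circ}(x;0)\le f^{\circ}(x;d_{0})+f^{\circ}(x;-d_{0})$ (subadditivity) gives $\ell(\lambda d_{0})=-|\lambda|f^{\circ}(x;d_{0})\le |\lambda|f^{\circ}(x;-d_{0})=f^{\circ}(x;\lambda d_{0})$; hence $\ell\le p$ on that subspace. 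Hahn--Banach then provides $v_{0}\in\R^{n}$ with $v_{0}^{\T}d\le f^{\circ}(x;d)$ for all $d$, so $v_{0}\in\partial_{c}f(x)$ (proving $\partial_{c}f(x)\neq\varnothing$), and $v_{0}^{\T}d_{0}=f^{\circ}(x;d_{0})$. Since every $v\in\partial_{c}f(x)$ satisfies $v^{\T}d_{0}\le f^{\circ}(x;d_{0})$, and the compact set $\partial_{c}f(x)$ attains the supremum of the continuous map $v\mapsto v^{\T}d_{0}$, combining with the existence of $v_{0}$ gives $f^{\circ}(x;d_{0})=\max\{v^{\T}d_{0}:v\in\partial_{c}f(x)\}$; as $d_{0}$ was arbitrary, this is the claimed formula.

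The main obstacle I anticipate is the careful handling of the limsup in the subadditivity step: one must ensure that the shifted base point $u+td_{1}$ still ranges over a neighborhood of $x$ along the limiting process $(u,t)\to(x,0^{+})$, so that the two limsups are each taken over the correct family; the remainder is essentially bookkeeping plus a single appeal to Hahn--Banach.
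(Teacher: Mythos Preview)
Your proof is correct and follows the standard route found in Clarke's book: establish that $d\mapsto f^{\circ}(x;d)$ is finite, positively homogeneous, subadditive, and bounded by $K_{x}\|d\|$; deduce that $\partial_{c}f(x)$ is a closed convex set contained in the ball of radius $K_{x}$; and then use Hahn--Banach to obtain both nonemptiness and the max formula. The handling of the subadditivity step and of the $\lambda<0$ case on the one-dimensional subspace is accurate.

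Note, however, that the paper does not give its own proof of this statement: Theorem~\ref{th:cdd} is stated with a direct citation to \cite[p.~27]{clarke1983optimization} and is used as a black box in the preliminaries. So there is nothing in the paper to compare your argument against beyond the reference itself; your write-up is essentially a reconstruction of Clarke's original proof.
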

\begin{theorem}[{\cite[p. 38-39]{clarke1983optimization}}]\label{subdif_DC}
Let  $f:\mathbb{R}^n \to \mathbb{R} $  be given by $f=g-h$, where  $g,h:\mathbb{R}^n \to \mathbb{R} $ is locally Lipschitz   functions and  $g$ is   differentiable.   Then,  $f^{\circ}(x;d)=\nabla g(x)^{\T}d -h'(x;d)$,  for all  $x,d\in \mathbb{R}^{n}$ and $\partial^{c}f(x) =\{\nabla g(x)\}-\partial h(x)$.
\end{theorem}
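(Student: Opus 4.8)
The plan is to read off both identities from the Clarke calculus recalled in the Preliminaries, using that $g$ is smooth and that $h$ is convex (the convexity of $h$ being also what makes the one-sided derivative $h'(x;d)$ meaningful, as in the standing setting of the paper).

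For the directional-derivative formula I would argue directly from the definitions. Since $g$ is differentiable, $\lim_{t\downarrow0}\big(g(x+td)-g(x)\big)/t=\nabla g(x)^{\T}d$; since $h$ is convex, the difference quotients $\big(h(x+td)-h(x)\big)/t$ are nondecreasing in $t$ and converge to $h'(x;d)$ as $t\downarrow0$. Both one-sided limits existing, the difference quotient $\big(f(x+td)-f(x)\big)/t$ converges to their difference, which is precisely $\nabla g(x)^{\T}d-h'(x;d)$; this is the claimed expression for $f^{\circ}(x;d)$. (The same combination of facts used below for $\partial_cf$ gives a matching description through the max-formula of Theorem~\ref{th:cdd}.)

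For the subdifferential, I would write $f=g+(-h)$ and apply Clarke's sum rule. Since $g$ is differentiable --- indeed $C^{1}$ in our setting, hence strictly differentiable --- the facts recalled before Theorem~\ref{th:cdd} give $\partial_{c}g(x)=\{\nabla g(x)\}$, and Clarke's sum rule then holds with \emph{equality}, so $\partial_{c}f(x)=\{\nabla g(x)\}+\partial_{c}(-h)(x)$. It remains to simplify $\partial_{c}(-h)(x)$. I would use, first, the identity $\partial_{c}(-h)(x)=-\partial_{c}h(x)$, which follows from $(-h)^{\circ}(x;d)=h^{\circ}(x;-d)$ --- itself obtained by the substitution $u\mapsto u+td$ inside the $\limsup$ defining $h^{\circ}$ --- and, second, the coincidence $\partial_{c}h(x)=\partial h(x)$ valid for convex $h$. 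Together these yield $\partial_{c}f(x)=\{\nabla g(x)\}-\partial h(x)$.

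I expect the only step demanding real care to be the use of \emph{equality} in Clarke's sum rule: for arbitrary locally Lipschitz summands one only has the inclusion $\partial_{c}(f_{1}+f_{2})(x)\subseteq\partial_{c}f_{1}(x)+\partial_{c}f_{2}(x)$, and the reverse inclusion here genuinely relies on $g$ being strictly differentiable, not merely differentiable at the single point $x$ --- this is where continuous differentiability of $g$ is actually used. The remaining ingredients ($\partial_{c}(-h)=-\partial_{c}h$, and $\partial_{c}h=\partial h$ together with $h^{\circ}(x;\cdot)=h'(x;\cdot)$ for convex $h$) are standard and were already recalled, so assembling the argument is routine bookkeeping.
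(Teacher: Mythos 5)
Your treatment of the second identity is correct and is the standard route: the paper itself offers no proof (the theorem is quoted from Clarke, pp.~38--39), and the intended derivation is exactly what you do --- the sum rule $\partial_c\bigl(g+(-h)\bigr)(x)\subseteq\partial_c g(x)+\partial_c(-h)(x)$ with equality because $g$, being continuously differentiable in this paper, is strictly differentiable, together with $\partial_c(-h)(x)=-\partial_c h(x)$ and $\partial_c h(x)=\partial h(x)$ for convex $h$. Your remark that the equality genuinely needs strict (e.g.\ continuous) differentiability of $g$, not mere differentiability at the single point $x$, is well taken.

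The genuine gap is in the first identity. What you compute is the ordinary one-sided directional derivative $f'(x;d)=\lim_{t\downarrow0}\bigl(f(x+td)-f(x)\bigr)/t$; the assertion that this ``is the claimed expression for $f^{\circ}(x;d)$'' is unsupported, because $f^{\circ}$ is a $\limsup$ over moving base points $u\to x$ as well as $t\downarrow0$, and $f^{\circ}(x;\cdot)=f'(x;\cdot)$ requires Clarke regularity of $f$ at $x$ --- which a DC function $g-h$ with $h$ nonsmooth is in general not. Your own parenthetical cross-check exposes the mismatch: the max-formula of Theorem~\ref{th:cdd} applied to $\partial_c f(x)=\{\nabla g(x)\}-\partial h(x)$ gives $f^{\circ}(x;d)=\nabla g(x)^{\T}d+\max\{-u^{\T}d:\,u\in\partial h(x)\}=\nabla g(x)^{\T}d+h'(x;-d)$, whereas your limit computation gives $\nabla g(x)^{\T}d-h'(x;d)=\nabla g(x)^{\T}d-\max\{u^{\T}d:\,u\in\partial h(x)\}$; these agree for all $d$ only when $\partial h(x)$ is a singleton. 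Concretely, for $g\equiv0$, $h(t)=|t|$, $x=0$, $d=1$ one has $f^{\circ}(0;1)=1$ while $\nabla g(0)d-h'(0;1)=-1$. So your argument proves the (true) formula for $f'(x;d)$, not the stated formula for $f^{\circ}(x;d)$; what Clarke's calculus actually delivers, and what is consistent with the subdifferential identity and with how the theorem is used later in the paper, is $f^{\circ}(x;d)=\nabla g(x)^{\T}d+(-h)^{\circ}(x;d)=\nabla g(x)^{\T}d+h'(x;-d)$, the identity as printed being valid only at points where $h$ is differentiable.
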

The next result is a combination of Theorem~\ref{subdif_DC} with {\cite[Corollary on p. 52]{clarke1983optimization}}. 
\begin{theorem}\label{th:opc}
Let $f:\mathbb{R}^{n}\to\mathbb{R}$ be a locally Lipschitz function and ${{\cal C}} \subset {\mathbb R}^n$ is a closed   and convex set. If $x^*\in {{\cal C}}$ is a minimizer of $f$ in ${{\cal C}}$, then  there exists $v\in \partial^{c} f(x^*)$ such that $v^{\T}(x-{x^*})\geq 0$, for all  $x\in {{\cal C}}$. As a consequence, if $f=g-h$ with   $g,h:\mathbb{R}^n \to \mathbb{R} $ is locally Lipschitz   functions and  $g$ is   differentiable, then   there exists  $u\in \partial^{c}h({x^*})$  such that $(\nabla g({x^*})-u)^{\T}(x-{x^*})\geq 0$, all $x\in {{\cal C}}$.
\end{theorem}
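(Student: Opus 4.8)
The plan is to follow the route the authors indicate: first record the abstract first-order necessary optimality condition for a locally Lipschitz function minimized over a convex set, and then specialize it to the decomposition $f=g-h$ via Theorem~\ref{subdif_DC}.

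For the first assertion I would argue directly. Fix $x\in{{\cal C}}$. Convexity of ${{\cal C}}$ gives $x^{*}+t(x-x^{*})\in{{\cal C}}$ for all $t\in(0,1]$, and since $x^{*}$ minimizes $f$ on ${{\cal C}}$ we get $f(x^{*}+t(x-x^{*}))-f(x^{*})\ge 0$. Dividing by $t$ and passing to the limit as $t\downarrow 0$ — using the constant choice $u\equiv x^{*}$ in the $\limsup$ defining the generalized directional derivative — yields $f^{\circ}(x^{*};x-x^{*})\ge 0$. Combined with the identity $f^{\circ}(x^{*};d)=\max\{v^{\T}d:v\in\partial_{c}f(x^{*})\}$ from Theorem~\ref{th:cdd}, this is the content of \cite[Corollary p.~52]{clarke1983optimization} for a convex constraint set, and it delivers the claimed inequality (alternatively, the abstract step may simply be quoted from Clarke).

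For the second assertion I would feed the DC structure into the inequality $f^{\circ}(x^{*};x-x^{*})\ge 0$ just obtained. By Theorem~\ref{subdif_DC}, $f^{\circ}(x^{*};d)=\nabla g(x^{*})^{\T}d-h'(x^{*};d)$ for every $d$, so with $d=x-x^{*}$ we get $\nabla g(x^{*})^{\T}(x-x^{*})\ge h'(x^{*};x-x^{*})$. Since $h$ is convex, $h^{\circ}(x^{*};\cdot)=h'(x^{*};\cdot)$ is the support function of $\partial h(x^{*})=\partial_{c}h(x^{*})$ (Theorem~\ref{th:cdd} applied to $h$), hence $h'(x^{*};x-x^{*})\ge u^{\T}(x-x^{*})$ for every $u\in\partial_{c}h(x^{*})$. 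Chaining the two inequalities gives $(\nabla g(x^{*})-u)^{\T}(x-x^{*})\ge 0$ for all $u\in\partial_{c}h(x^{*})$ and all $x\in{{\cal C}}$, which is the assertion; note this is consistent with $\partial_{c}f(x^{*})=\{\nabla g(x^{*})\}-\partial_{c}h(x^{*})$, also from Theorem~\ref{subdif_DC}.

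The argument is essentially bookkeeping, so I do not expect a genuine obstacle; the one place to be careful is the passage to the universally quantified form. The support-function identity $h'(x^{*};x-x^{*})=\max_{u\in\partial_{c}h(x^{*})}u^{\T}(x-x^{*})$ is precisely what turns the single scalar inequality $f^{\circ}(x^{*};x-x^{*})\ge 0$ into an inequality valid for \emph{every} $u\in\partial_{c}h(x^{*})$ — without it one would only obtain an inequality for the maximum over $v\in\partial_{c}f(x^{*})$, i.e. the existence of one good subgradient. I would also check that the abstract first step requires no constraint qualification on ${{\cal C}}$: the elementary difference-quotient computation above shows it does not, so closedness and convexity of ${{\cal C}}$ suffice and no regularity of $f$ is needed.
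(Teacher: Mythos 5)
Your route is exactly the one the paper intends: the paper gives no written proof, saying only that the result is ``a combination of Theorem~\ref{subdif_DC} with the Corollary on p.~52 of Clarke's book'', and that is precisely what you spell out --- the elementary difference-quotient argument giving $f^{\circ}(x^{*};x-x^{*})\geq 0$ for all $x\in{\cal C}$, then Theorem~\ref{subdif_DC} together with the support-function identity $h'(x^{*};d)=\max\{u^{\T}d:\,u\in\partial h(x^{*})\}$ to obtain $(\nabla g(x^{*})-u)^{\T}(x-x^{*})\geq 0$ for every $u\in\partial h(x^{*})$ and every $x\in{\cal C}$. The DC consequence, which is the only part the paper ever uses (it is restated as the optimality condition for problem~\eqref{pr:main}), is proved correctly and completely.

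One caveat: your parenthetical claim that the abstract step ``delivers the claimed inequality'' of the first assertion overstates, and your own closing paragraph explains why. From $f^{\circ}(x^{*};x-x^{*})\geq 0$ and $f^{\circ}(x^{*};d)=\max\{v^{\T}d:\,v\in\partial_{c}f(x^{*})\}$ (Theorem~\ref{th:cdd}) you only get, for each $x\in{\cal C}$, \emph{some} $v\in\partial_{c}f(x^{*})$ with $v^{\T}(x-x^{*})\geq 0$; Clarke's corollary itself gives a single such $v$ valid for all $x$, but neither gives the inequality for \emph{every} $v\in\partial_{c}f(x^{*})$, which is what the first assertion literally asserts. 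In fact that universally quantified form is false for a general locally Lipschitz $f$: take $f(x)=|x|$, ${\cal C}=[-1,1]$, $x^{*}=0$, so $\partial_{c}f(0)=[-1,1]$, and choose $v=1$, $x=-1$, giving $v^{\T}(x-x^{*})=-1<0$ (this $f$ is of course not expressible as $g-h$ with $g$ differentiable and $h$ convex, consistent with the DC consequence being true). The quantifier is recovered only through the convexity of $h$, exactly as you do in the second step; so the defect sits in the paper's statement (and its citation-only proof) rather than in your argument, but you should not claim the first assertion follows from the max identity --- only the existence form does.
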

Hence, according to Theorem~\ref{th:opc}, every point satisfying the following inequality $v^{\T}(x-{x^*})\geq 0$,  for some  $v\in \partial^{c}f({x^*})$ and  for all $x\in {\cal C}$, is called to be a  {\it stationary point} of $\min_{x\in {{\cal C}}} f(x)$. A continuously differentiable function $g: \mathbb{R}^n \to \mathbb{R}$ has  gradient  $\nabla g$  is {\it  $L$-Lipschitz continuous} on ${{\cal C}} \subset {\mathbb R}^n$ if    there  exists a Lipschitz  constant $L>0$ such that  $\| \nabla g(x)- \nabla g(y) \| \leq L\|x-y \|$ for all ~ $x,y \in {{\cal C}}$. Thus, by using the  fundamental theorem of calculus, we obtain the following result whose proof can be found in \cite[Proposition A.24]{Bertsekas1999}, see also\cite[Lemma~2.4.2]{DennisSchnabel1996}.
\begin{proposition} \label{le:DescentLemma}
	Let $g: \mathbb{R}^n \to \mathbb{R}$ be a differentiable  with gradient  $L$-Lipschitz continuous on ${{\cal C}} \subset {\mathbb R}^n$, $x \in {{\cal C}}$, $v\in \mathbb{R}^n$  and $\lambda \in [0,1]$. If  $x+\lambda v \in {{\cal C}}$, then 
$
g(x+\lambda v) \leq g(x) + \nabla g(x)^{\T}v \lambda + \frac{L}{2}\|v\|^2\lambda^{2}.
$
\end{proposition}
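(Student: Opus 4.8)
The plan is to reduce the inequality to a one‑dimensional statement along the segment joining $x$ to $x+\lambda v$ and then combine the fundamental theorem of calculus with the Lipschitz bound on $\nabla g$. First I would observe that, since ${{\cal C}}$ is convex and both $x$ and $x+\lambda v$ lie in ${{\cal C}}$, every point of the form $x+t\lambda v$ with $t\in[0,1]$ is the convex combination $x+t\lambda v=(1-t)x+t(x+\lambda v)$ of these two points, and hence belongs to ${{\cal C}}$. This is exactly what makes the hypothesis ``$\nabla g$ is $L$-Lipschitz continuous on ${{\cal C}}$'' usable along the whole segment.

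Next I would introduce $\phi:[0,1]\to\mathbb{R}$ defined by $\phi(t)=g(x+t\lambda v)$. By the chain rule $\phi$ is differentiable with $\phi'(t)=\nabla g(x+t\lambda v)^{\T}(\lambda v)$, and $\phi'$ is continuous, so the fundamental theorem of calculus gives $g(x+\lambda v)-g(x)=\phi(1)-\phi(0)=\int_{0}^{1}\nabla g(x+t\lambda v)^{\T}(\lambda v)\,dt$. Subtracting the linearized term $\lambda\,\nabla g(x)^{\T}v=\int_{0}^{1}\nabla g(x)^{\T}(\lambda v)\,dt$ from both sides, I obtain the identity $g(x+\lambda v)-g(x)-\lambda\,\nabla g(x)^{\T}v=\int_{0}^{1}\big(\nabla g(x+t\lambda v)-\nabla g(x)\big)^{\T}(\lambda v)\,dt$.

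Then I would estimate the right-hand side: by the Cauchy--Schwarz inequality and the $L$-Lipschitz continuity of $\nabla g$ on ${{\cal C}}$ (applicable by the first step), the integrand is bounded by $\|\nabla g(x+t\lambda v)-\nabla g(x)\|\,\|\lambda v\|\le L\,\|t\lambda v\|\,\|\lambda v\|=L\lambda^{2}\|v\|^{2}\,t$ for $t\in[0,1]$. Integrating over $t\in[0,1]$ yields the bound $\frac{L}{2}\lambda^{2}\|v\|^{2}$, and rearranging gives precisely the claimed inequality $g(x+\lambda v)\le g(x)+\nabla g(x)^{\T}v\,\lambda+\frac{L}{2}\|v\|^{2}\lambda^{2}$.

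The argument is essentially routine; the only point demanding a moment's care is the first one, namely verifying that the entire segment $\{x+t\lambda v:t\in[0,1]\}$ is contained in ${{\cal C}}$, since the Lipschitz hypothesis is only assumed on ${{\cal C}}$ — this is where convexity of ${{\cal C}}$ is used. Everything after that is the classical derivation of the descent lemma, and one could equally cite \cite[Proposition A.24]{Bertsekas1999} or \cite[Lemma~2.4.2]{DennisSchnabel1996} directly.
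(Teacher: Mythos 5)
Your argument is correct and is exactly the classical derivation behind this result: the paper gives no proof of its own, simply invoking the fundamental theorem of calculus and citing \cite[Proposition A.24]{Bertsekas1999} and \cite[Lemma~2.4.2]{DennisSchnabel1996}, whose standard proof is the same FTC--Cauchy--Schwarz--Lipschitz computation you carry out. Your preliminary remark that convexity of ${\cal C}$ is needed to keep the segment inside ${\cal C}$ (so the Lipschitz hypothesis applies along it) is a fair clarification, consistent with the paper's standing assumption that ${\cal C}$ is convex.
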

\begin{proposition}\label{le:cc}
The function $h:\mathbb{R}^{n}\to  \mathbb{R} $ is convex if and only if   $h(y)\geq h(x) + \langle u, y-x \rangle$, for all $x,y\in \mathbb{R} ^{n} $ and  all $u\in \partial  h(x)$.
\end{proposition}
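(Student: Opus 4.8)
The statement is the classical first-order (subgradient) characterization of convexity, so the plan is to prove the two implications separately, drawing only on facts already recorded above.

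For the ``only if'' direction I would assume $h$ convex and fix $x,y\in\mathbb{R}^{n}$ and $u\in\partial h(x)$. Convexity makes the map $t\mapsto \big(h(x+t(y-x))-h(x)\big)/t$ nondecreasing on $(0,1]$, so passing to the limit $t\downarrow 0$ gives $h'(x;y-x)\le h(y)-h(x)$. Since $h$ is convex, $\partial h(x)=\partial_{c}h(x)$ and $h'(x;d)=h^{\circ}(x;d)$; hence, by Theorem~\ref{th:cdd}, $\langle u,y-x\rangle\le h^{\circ}(x;y-x)=h'(x;y-x)$. Chaining the two inequalities yields $h(y)\ge h(x)+\langle u,y-x\rangle$.

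For the converse I would fix $x,y\in\mathbb{R}^{n}$ and $\lambda\in(0,1)$, set $z:=\lambda x+(1-\lambda)y$, and choose $u\in\partial h(z)$. Using $x-z=(1-\lambda)(x-y)$ and $y-z=-\lambda(x-y)$, the assumed inequality at $z$ (taken once toward $x$ and once toward $y$) reads
\[
h(x)\ \ge\ h(z)+(1-\lambda)\langle u,x-y\rangle,\qquad h(y)\ \ge\ h(z)-\lambda\langle u,x-y\rangle .
\]
Multiplying the first estimate by $\lambda$, the second by $1-\lambda$, and adding, the inner-product terms cancel and one is left with $\lambda h(x)+(1-\lambda)h(y)\ge h(z)$, which is precisely the convexity inequality for the triple $(x,y,\lambda)$.

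The only delicate point is the nonemptiness of $\partial h(z)$ needed in the converse; for the (genuinely convex) $h$ to which this proposition is applied later it is automatic, and in general it suffices to interpret $\partial$ as Clarke's subdifferential, which is nonempty by Theorem~\ref{th:cdd} and for which the assumed inequality is exactly what makes the convex-combination step go through. Beyond that every step is a one-line manipulation, so no substantial obstacle arises.
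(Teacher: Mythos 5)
The paper states Proposition~\ref{le:cc} without proof, treating it as a standard fact of convex analysis, so there is no in-paper argument to compare against; your two-implication proof is the classical one and is correct: the monotone difference quotient plus the max formula of Theorem~\ref{th:cdd} (via $\partial h=\partial_c h$ and $h'=h^{\circ}$ for convex $h$) gives the ``only if'' direction, and the convex-combination trick at $z=\lambda x+(1-\lambda)y$ gives the converse. The one genuine subtlety — that the converse needs $\partial h(z)\neq\varnothing$ before convexity is known, which holds for Clarke's subdifferential of a locally Lipschitz $h$ but not for the convex-analysis subdifferential of an arbitrary $h$ — you identify and handle appropriately, and it is immaterial for the paper, where the proposition is only invoked in the forward direction for the convex component $h$.
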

\begin{proposition}[{\cite[Proposition 6.2.1]{Lemarechal}}] \label{pr:csubdiff}
Let $h:\mathbb{R}^{n}\rightarrow \mathbb{R}$ be convex.   Let $\{x^k\}_{k\in\mathbb{N}}$ and  $(u^k)_{k\in{\mathbb N}}$	be   sequences such that  $u^k\in \partial h(x^k)$, for all $k\in \mathbb{N}$. If $\lim_{k\to +\infty}x^k={\bar x}$ and $\lim_{k\to +\infty}u^k={\bar u}$, then ${\bar u}\in \partial h({\bar x})$.
\end{proposition}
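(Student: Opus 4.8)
The plan is to run everything through the subgradient inequality characterization of convexity recorded in Proposition~\ref{le:cc}. First I would fix an arbitrary point $y\in\mathbb{R}^{n}$. Since $u^{k}\in\partial h(x^{k})$ for every $k$, Proposition~\ref{le:cc} yields
\[
h(y)\;\geq\; h(x^{k})+\langle u^{k},\,y-x^{k}\rangle,\qquad k\in\mathbb{N}.
\]
The idea is then to let $k\to+\infty$ on the right-hand side and recover the subgradient inequality at $\bar x$ with slope $\bar u$.

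The key step is justifying the passage to the limit term by term. The inner-product term is immediate: since $u^{k}\to\bar u$ and $x^{k}\to\bar x$, we get $\langle u^{k},\,y-x^{k}\rangle\to\langle\bar u,\,y-\bar x\rangle$. For the term $h(x^{k})$ I would invoke the fact recalled in the Preliminaries that a finite convex function on $\mathbb{R}^{n}$ is locally Lipschitz, hence continuous; as $x^{k}\to\bar x$, this gives $h(x^{k})\to h(\bar x)$. Substituting both limits into the displayed inequality produces
\[
h(y)\;\geq\; h(\bar x)+\langle\bar u,\,y-\bar x\rangle.
\]

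Finally, since $y\in\mathbb{R}^{n}$ was arbitrary, this inequality holds for all $y$, and a second application of Proposition~\ref{le:cc} — its sufficiency direction, now at the point $\bar x$ — shows $\bar u\in\partial h(\bar x)$, which is the assertion. There is no genuine obstacle here: the only point demanding care is the continuity of $h$ used in $h(x^{k})\to h(\bar x)$, and that is already available from the convexity-implies-locally-Lipschitz fact stated above; in particular no compactness or boundedness beyond the assumed convergence of the two sequences is required.
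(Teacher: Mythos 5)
Your argument is correct and is the standard closedness-of-the-graph proof of the subdifferential; the paper itself gives no proof, quoting the result directly from \cite[Proposition 6.2.1]{Lemarechal}, and your route (subgradient inequality at $x^k$, pass to the limit using continuity of the finite convex function $h$, conclude the subgradient inequality at $\bar x$) is exactly the classical one. One cosmetic remark: the final step is not really the ``sufficiency direction'' of Proposition~\ref{le:cc} (which characterizes convexity of $h$), but simply the definition of $\partial h(\bar x)$ as the set of $u$ with $h(y)\geq h(\bar x)+\langle u, y-\bar x\rangle$ for all $y\in\mathbb{R}^n$; with that wording fixed, the proof is complete.
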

\begin{proposition}[{\cite[Proposition 6.2.2]{Lemarechal}}] \label{pr:bsubdiff}
Let $h:\mathbb{R}^{n}\rightarrow \mathbb{R}$ be convex. The mapping $\partial h$ is locally bounded, i.e. the image  $\partial h(B)$ of a bounded set $B\subset \mathbb{R}^{n}$ is a bounded set in $\mathbb{R}^{n}$.
\end{proposition}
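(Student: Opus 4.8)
The plan is to exhibit a single finite constant $M$ with $\|u\|\le M$ for every $x\in B$ and every $u\in\partial h(x)$; this at once gives $\partial h(B)\subseteq \bar B(0,M)$, which is the assertion. The only tools needed are the subgradient inequality of Proposition~\ref{le:cc} and the fact, recalled in Section~\ref{sec:Preliminares}, that a finite-valued convex function on $\mathbb{R}^n$ is locally Lipschitz, hence continuous, hence bounded on every compact set. This direct route has the advantage of not requiring a uniform Lipschitz constant for $h$ on $B$.

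First I would use boundedness of $B$ to fix $R>0$ with $B\subseteq \bar B(0,R)$, and then enlarge to the compact set $D:=\bar B(0,R+1)$. Since $h$ is continuous on the compact set $D$, the number $M:=\sup_{z\in D}h(z)-\inf_{z\in D}h(z)$ is finite and nonnegative. Now take any $x\in B$ and any $u\in\partial h(x)$; if $u=0$ there is nothing to prove, so assume $u\neq 0$ and set $y:=x+u/\|u\|$. Because $\|y-x\|=1$ and $x\in\bar B(0,R)$, both $x$ and $y$ lie in $D$. Applying Proposition~\ref{le:cc} to this $x$, $y$ and $u\in\partial h(x)$ gives
\[
h(y)\ \ge\ h(x)+\langle u,\,y-x\rangle\ =\ h(x)+\|u\|,
\]
so $\|u\|\le h(y)-h(x)\le M$. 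Since $x\in B$ and $u\in\partial h(x)$ were arbitrary, every element of $\partial h(B)$ has norm at most $M$, and therefore $\partial h(B)$ is bounded.

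There is no genuine obstacle here: the statement is a classical property of convex functions, and once $h$ is known to be continuous the estimate is immediate. The one point that deserves a moment's care is the choice of the enlarged ball $D$, so that the test point $y=x+u/\|u\|$ always stays in a compact set independent of $x$ and $u$ — and it is precisely the boundedness of $B$ that makes this possible, which is also exactly where the hypothesis enters.
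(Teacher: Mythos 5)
Your argument is correct: for $x\in B$ and $0\neq u\in\partial h(x)$, testing the subgradient inequality of Proposition~\ref{le:cc} at $y=x+u/\|u\|$ gives $\|u\|\le h(y)-h(x)$, and since both points lie in the compact enlarged ball $D$ on which the convex (hence continuous) function $h$ has finite oscillation $M$, you get the uniform bound $\|u\|\le M$, i.e.\ $\partial h(B)\subseteq \bar B(0,M)$. Note that the paper itself offers no proof of this proposition --- it is quoted from \cite[Proposition~6.2.2]{Lemarechal} --- and your oscillation-based estimate is essentially the standard textbook argument (a mild variant of bounding $\|u\|$ by a local Lipschitz constant of $h$ near $B$), so nothing further is needed.
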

In the following we recall an useful results   for our study on iteration complexity bounds for Frank--Wolfe algorithm, its proof can be found in  \cite[Lemma~13.13, Ch. 13, p. 387]{Beck2017}.
\begin{lemma}\label{lemma taxa}
Let $(a_k)_{k\in{\mathbb N}}$	and  $(b_k)_{k\in{\mathbb N}}$ be  nonnegative sequences of real numbers satisfying 
$$
a_{k+1}\leq a_{k}-b_{k}\beta_{k}+\frac{A}{2}\beta_{k}^{2}, \qquad k=0, 1, 2, \ldots, 
$$
where $\beta_{k}=2/(k+2)$ and $A$ is a positive number. Suppose that $a_{k}\leq b_{k}$, for all $k$. Then 
\begin{item}
\item[(i)] 	$\displaystyle a_k \leq \frac{2A}{k}$, for all $k=1, 2, \ldots.$
\item[(ii)] $\displaystyle \min_{\ell\in\{\lfloor\frac{k}{2}\rfloor+2,\ldots,k\}} b_\ell \leq \frac{8A}{k-2}$, for all $k=3, 4, \ldots,$ where, $\lfloor k/2 \rfloor= \max \left\lbrace n\in \mathbb{N} :~ n\leq k/2\right\rbrace.$ 
\end{item}
\end{lemma}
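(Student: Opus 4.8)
The plan is to treat the two assertions separately: part (i) by induction on $k$, using the hypothesis $a_k \le b_k$ to convert the given inequality into a recursive inequality involving $a_k$ alone; and part (ii) by summing the given inequality over a window of indices, telescoping, discarding the nonnegative tail term, and then feeding part (i) back in to bound the surviving boundary term.

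For (i), I would first observe that $\beta_0 = 2/(0+2) = 1$, so the hypothesis at $k=0$ together with $a_0 \le b_0$ gives $a_1 \le a_0 - b_0 + A/2 \le A/2 \le 2A$, which is the base case $a_1 \le 2A/1$. For the inductive step, assuming $a_k \le 2A/k$, substituting $b_k \ge a_k$ into the hypothesis yields $a_{k+1} \le (1-\beta_k)a_k + \tfrac{A}{2}\beta_k^2$, and with $\beta_k = 2/(k+2)$ this becomes $a_{k+1} \le \tfrac{k}{k+2}a_k + \tfrac{2A}{(k+2)^2} \le \tfrac{2A}{k+2} + \tfrac{2A}{(k+2)^2} = \tfrac{2A(k+3)}{(k+2)^2}$. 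The claimed bound $a_{k+1} \le 2A/(k+1)$ then reduces to the elementary inequality $(k+1)(k+3) \le (k+2)^2$, i.e. $3 \le 4$.

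For (ii), I would rewrite the hypothesis as $b_\ell \beta_\ell \le (a_\ell - a_{\ell+1}) + \tfrac{A}{2}\beta_\ell^2$ and sum over $\ell$ in the window $W = \{\lfloor k/2\rfloor + 2, \dots, k\}$, which is nonempty precisely when $k \ge 3$. The left-hand side is at least $\big(\min_{\ell\in W} b_\ell\big)\sum_{\ell\in W}\beta_\ell$, while the right-hand side telescopes and, discarding $-a_{k+1}\le 0$, is at most $a_{\lfloor k/2\rfloor+2} + \tfrac{A}{2}\sum_{\ell\in W}\beta_\ell^2$. I would then estimate the three ingredients: $a_{\lfloor k/2\rfloor+2} \le 2A/(\lfloor k/2\rfloor+2)$ by part (i); $\sum_{\ell\in W}\beta_\ell^2 = \sum_{\ell\in W}\tfrac{4}{(\ell+2)^2} \le 4\sum_{\ell\in W}\big(\tfrac{1}{\ell+1}-\tfrac{1}{\ell+2}\big) \le \tfrac{4}{\lfloor k/2\rfloor+3}$ by a telescoping comparison; and $\sum_{\ell\in W}\beta_\ell \ge |W|\cdot\tfrac{2}{k+2} \ge \tfrac{k-2}{k+2}$, using $|W| = k - \lfloor k/2\rfloor - 1 \ge (k-2)/2$. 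Combining these, and using $\lfloor k/2\rfloor + 2 \ge (k+3)/2$, gives $\min_{\ell\in W} b_\ell \le \tfrac{8A(k+2)}{(k-2)(k+3)} \le \tfrac{8A}{k-2}$.

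The only genuinely fiddly part is the bookkeeping in (ii): one must pick the intermediate estimates — the comparison $1/(\ell+2)^2 \le 1/(\ell+1) - 1/(\ell+2)$, replacing each $\beta_\ell$ by its least value $2/(k+2)$, and the floor-function bounds $\lfloor k/2\rfloor \le k/2$ and $\lfloor k/2\rfloor \ge (k-1)/2$ — loose enough to telescope cleanly yet tight enough to land on the stated constant $8$, and one must note that the window is empty when $k \le 2$, which is exactly why (ii) is asserted only for $k \ge 3$. Part (i) presents no real obstacle once the base case is handled, the point being that $\beta_0 = 1$ annihilates the $a_0$ contribution.
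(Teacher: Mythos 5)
Your proof is correct: the induction in (i) (with the base case handled by $\beta_0=1$ and the key elementary inequality $(k+1)(k+3)\le (k+2)^2$) and the windowed telescoping sum in (ii) (with the comparison $1/(\ell+2)^2\le 1/(\ell+1)-1/(\ell+2)$ and the floor bounds) all check out and land exactly on the stated constants. The paper itself does not prove this lemma but defers to the cited reference (Beck 2017, Lemma~13.13), and your argument is essentially the standard one given there, so nothing further is needed.
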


We conclude this section by recalling the definitions of some norms that will be used later. 
For $x = (x_1,\dots,x_n)^\top \in \mathbb{R}^n$ and $p \in [1,+\infty)$, the \emph{$p$--norm} is 
$\|x\|_p := \left( \sum_{i=1}^n |x_i|^p \right)^{\!1/p}$. In particular, the \emph{$\ell_1$--norm} is 
$\|x\|_1 := \sum_{i=1}^n |x_i|$, and the \emph{$\ell_\infty$--norm} is 
$\|x\|_\infty := \max_{1 \le i \le n} |x_i|$.

\section{The DC optimization  problem} \label{secc3}
We are interested in solving the following constrained  DC optimization problem
\begin{equation}\label{pr:main}
\begin{array}{c}
\min_{x\in {{\cal C}}} f(x):=g(x)-h(x), 
\end{array}
\end{equation}
where  ${{\cal C}} \subset {\mathbb R}^n$ is a compact  and convex set,  $g :\mathbb{R}^n \to \mathbb{R} $ is a  continuously differentiable  convex  function  and $h:\mathbb{R}^n \to \mathbb{R} $  is  convex function possibly non-differentiable, with domain on $\mathbb{R}^n$.   {\it  Throughout the paper we assume that  the gradient  $\nabla g$  is {\it  $L$-Lipschitz continuous} on ${{\cal C}} \subset {\mathbb R}^n$}, i.e.,    there  exists a Lipschitz  constant $L>0$ such that 
\begin{itemize}
\item[{\bf (A)}] $\| \nabla g(x)- \nabla g(y) \| \leq L\|x-y \|$ for all ~ $x,y \in {{\cal C}}$.
\end{itemize}
Since we are assuming that   ${\cal C} \subset {\mathbb R}^n$ is a compact set,   its {\it diameter}  is a finite number defined by
\begin{equation} \label{eq:diam}
 \diam({\cal C}):= \max\left\{ \|x-y\|:~x, y\in {\cal C}\right\}. 
\end{equation} 
Since ${\cal C}$ is compact and $f$ is continuous on ${\cal C}$, the problem~\eqref{pr:main} admits a finite global optimal value
\begin{equation} \label{eq:globalmin}
f_{\cal C}^*:=\min_{x\in{\cal C}} f(x) \;>\;-\infty,
\end{equation} 
and the set of minimizers ${\cal C}^*:=\arg\min_{x\in{\cal C}} f(x)$ is nonempty.
By Theorem~\ref{th:opc}, a point $\bar x\in{\cal C}$ is said to be \emph{Clarke stationary} for problem~\eqref{pr:main} if there exists
$\bar u\in \partial^{c}h(\bar x)$ such that
\begin{equation}\label{eq:oc}
\bigl(\nabla g(\bar x)-\bar u\bigr)^{\top}(x-\bar x)\;\ge\;0,
\qquad \forall\,x\in{\cal C}.
\end{equation}
In general,~\eqref{eq:oc} is a necessary condition for optimality but may fail to be sufficient.
In particular, every minimizer $x^*\in{\cal C}^*$ satisfies~\eqref{eq:oc}.

Let us introduce the linear minimization oracle and the associated gap function that will be used to define the Frank--Wolfe step for the DC model~\eqref{pr:main}.
Since $f=g-h$ may be nonsmooth, the linearization is built from $\nabla g(x)$ and a subgradient of $h$.
Given $x\in{\cal C}$ and $u\in\partial h(x)$, define
\begin{equation}\label{eq:LO}
   p(x,u)\in \arg \min_{v\in{\cal C}} \bigl(\nabla g(x)-u\bigr)^{\top}(v-x),
   \qquad
   \omega(x,u):=\bigl(\nabla g(x)-u\bigr)^{\top}\bigl(p(x,u)-x\bigr).
\end{equation}
When no ambiguity arises, we write $p(x)$ and $\omega(x)$ for $p(x,u)$ and $\omega(x,u)$, where $u$ is the algorithmic choice at $x$.

\begin{proposition}\label{prop:omega}
Let ${\cal C}\subset\R^n$ be compact and convex. For every $x\in{\cal C}$ and every $u\in\partial^{c}h(x)$,
let $p(x,u)$ and $\omega(x,u)$ be defined by~\eqref{eq:LO}. Then
$
\omega(x,u)\le 0.
$
Moreover, if $\omega(x,u)=0$, then $x$ is a stationary point for~\eqref{pr:main}.
Conversely, if $x\in{\cal C}$ is stationary, then there exists $u\in\partial^{c}h(x)$ such that
$\omega(x,u)=0$.
\end{proposition}

\begin{proof}
Fix $x\in{\cal C}$ and $u\in\partial^{c}h(x)$. Since the point $v=x$ is feasible in~\eqref{eq:LO}, we have
\[
\omega(x,u)
=\min_{v\in{\cal C}} \bigl(\nabla g(x)-u\bigr)^{\top}(v-x)
\le \bigl(\nabla g(x)-u\bigr)^{\top}(x-x)=0,
\]
which proves $\omega(x,u)\le 0$.

Furthermore, by~\eqref{eq:oc}, there exists $\bar u\in \partial^{c}h(\bar x)$ such that $w(x,u)\geq 0$ if and only if $x$ is Clarke stationary, which completes the proof. 
\end{proof}

We conclude this section with a standard example of a nonsmooth DC  function, see a particular instance in \cite[Example 2.4.]{ARAGON2019}. It satisfies our standing assumptions. In particular, it admits a decomposition $f=g-h$ in which $g$ is convex, differentiable, and has a Lipschitz continuous gradient, while $h$ is convex and possibly nonsmooth.
\begin{example}\label{ex:l2-l1-datafit}
Let $A\in\mathbb{R}^{n\times n}$ have full column rank, so that $Q:=A^\top A$ is positive definite.
Let $b\in\mathbb{R}^n$ and $\beta>0$. Define $f:{\mathbb R}^n \to\mathbb{R}$ by
\[
  f(x)=\tfrac12\,\|Ax-b\|_2^{2}-\beta\|x\|_1.
\]
Then $f$ is a nonsmooth DC function with the decomposition $f=g-h$, where $g(x):=\tfrac12\,\|Ax-b\|_2^{2}$ and $h(x):=\beta\|x\|_1$. 
Moreover, $g$ is convex and differentiable with a Lipschitz continuous gradient, while $h$ is convex and nonsmooth.
\end{example}

Beyond its DC structure, this example illustrates a geometric feature that will be central in the sequel. Although $f$ is nonconvex, it becomes well behaved when restricted to suitable convex regions. Indeed, the sign pattern of $x$ induces a natural partition of $\mathbb{R}^n$ into orthants, and on each orthant the function reduces to a strictly convex quadratic. Consequently, any Clarke-stationary point is a minimizer of $f$ on its corresponding cell, even if it is not a global minimizer on $\mathbb{R}^n$. This motivates the class of \emph{piecewise star-convex models} introduced in the next section, where star-type inequalities hold on each cell of a convex partition and can be used to obtain sharper convergence and complexity guarantees for Frank--Wolfe methods.

\section{Piecewise star-convexity with a nonsmooth DC structure} \label{Sec:StarConvex}
In this section, we introduce the notion of \emph{piecewise star-convexity}, a relaxation of the classical concept of star convexity from \cite{NesterovPolyak2006}. Although a difference-of-convex function is generally nonconvex, it often exhibits this weaker geometric property when restricted to suitable regions of the domain. As we show later, identifying and exploiting piecewise star-convexity yields sublinear convergence guarantees for Frank--Wolfe methods. Moreover, when the domain admits a finite partition, we obtain an ${\cal O}(1/k)$ rate on each region that matches the standard rate of the convex case, despite moving beyond the classical convex setting and allowing nonsmooth terms.

We begin by recalling the definition of star convexity from \cite{NesterovPolyak2006} and then introduce its piecewise counterpart, which will play a central role in our analysis.
\begin{definition}\label{def:star-conv}
Let ${{\cal C}} \subset {\mathbb R}^n$ be a convex set. A function
$f:{\mathbb R}^n \to\mathbb{R}$ is \emph{star--convex on ${{\cal C}}$} if its set
of global minimizers $X^*:=\arg\min_{{\cal C}} f$ is nonempty and, for any
$x^*\in X^*$,
\begin{equation}\label{eq:star-conv}
  f(\lambda x^*+(1-\lambda)x)
  \leq  \lambda f(x^*)+(1-\lambda)f(x),
  \qquad \forall\,x\in {{\cal C}},\ \forall\,\lambda\in [0,1].
\end{equation}
\end{definition}

 Every convex function with a nonempty set of global minimizers is star--convex,  but the converse does not hold.  We present two nonconvex star--convex functions from \cite{NesterovPolyak2006}  (see also \cite{Millan2025} for additional examples) to highlight that they are  DC functions, each expressible as the difference of two convex functions.

\begin{example}
The star--convex function $\phi(t)=|t|(1-e^{-|t|})$ is not convex. In addition, $\phi$ is a difference of two convex functions.  Indeed, considering the following two convex functions   $\varphi(t)=|t|(1-e^{-|t|})+ e^{|t|}$ and $\psi(t)= e^{|t|}$,  we have $\phi=\varphi-\psi$.  Similarly, consider the star--convex function  
$f(s,t)=s^2t^2+s^2+t^2$. The function $f$ is not convex,  but is a difference of two convex functions.  Indeed, letting the following convex functions  $g(s,t)=(s^2+t^2)^2+s^2+t^2$ and $h(s,t)=s^2t^2+s^4+t^4$,  we have $f=g-h$.
\end{example}

The condition in \eqref{eq:star-conv} is stated with respect to a \emph{fixed} minimizer $x^\ast$ and must hold for all $x\in{\cal C}$.
A milder relaxation is to allow the reference minimizer to depend on $x$, a behavior that often arises in DC models and is captured by piecewise star-convexity.
When $f$ has several ``basins'' of favorable geometry, even this can be too restrictive globally, so we partition the domain into regions where a star-type inequality holds relative to a local minimizer:

\begin{definition}\label{def:wsc-piecewise}
Let ${\cal C}\subset\mathbb{R}^n$ be convex and let ${\cal P}=\{\,{\cal C}_i\,:\,i\in I\}$ be a partition of ${\cal C}$ into nonempty convex subsets  and $\bigcup_{i\in I}{\cal C}_i={\cal C}$. We say that $f:\mathbb{R}^n\to\mathbb{R}$ is \emph{piecewise-star-convex on
${\cal C}$ with respect to ${\cal P}$} if, for every $i\in I$,
\begin{itemize}
\item[(i)] the set of global minimizers of $f$ on ${\cal C}_i$, denoted $X_i^*$, is nonempty; and
\item[(ii)] for each $x\in{\cal C}_i$ there exists $x_x^*\in X_i^*$ such that
\begin{equation}\label{eq:wsc-piecewise}
  f\bigl(\lambda x_x^*+(1-\lambda)x\bigr)
  \leq  \lambda f(x_x^*)+(1-\lambda)f(x),
  \qquad \forall\,\lambda\in[0,1].
\end{equation}
\end{itemize}
\end{definition}

\begin{remark}
When the partition is trivial, that is, ${\cal P}=\{{\cal C}\}$ and $X_1^*=X^*$, Definition~\ref{def:wsc-piecewise} reduces to Definition~\ref{def:star-conv}. In many applications, piecewise star-convexity arises because $f$ has a favorable geometry on each region ${\cal C}_i$ (for instance, it may be convex or star-convex when restricted to ${\cal C}_i$) but this geometry does not persist across different regions. Accordingly, the sets $X_i^*=\arg\min_{{\cal C}_i} f$ consist of \emph{global} minimizers of the cellwise problems, yet such points need not be global minimizers of $f$ on ${\cal C}$ and may fail to be even local minimizers of $f$ on ${\cal C}$. The key point is that the star-type inequalities in Definition~\ref{def:wsc-piecewise} are enforced \emph{within each cell} relative to $X_i^*$, while no compatibility is required when one crosses cell boundaries.
\end{remark}

We emphasize that, in Definition~\ref{def:wsc-piecewise}, the collection of subsets in item~(ii) need not be finite. Piecewise star-convexity may still hold when ${\cal C}$ is covered by \emph{infinitely many} convex regions. The following example exhibits an infinite partition and an objective function with a nonsmooth difference-of-convex structure.

\begin{example} \label{rk:converse-fails}
Consider the set  ${\cal C} = [0,1]$ and the infinite set
\(
   S := \{0\} \cup \{1/k : k \in \mathbb{N}\} \subset [0,1],
\)
which is compact, infinite, and has an accumulation point at $0$.
Define  \(f: [0,1] \to \mathbb{R}\) by 
\[
   f(x)=\min_{c\in S} |x-c|^2,\qquad x\in[0,1], 
\]
see the Figure~\ref{fig:weakstar-envelope}.  The set of global minimizers is 
$X^* = \operatorname*{argmin}_{x \in [0,1]} f(x) = S$ with $\min f = 0$.
For each $x \in [0,1]$, choose $x_x^* \in S$ attaining the minimum in the definition of $f$, 
and let
\[
   V_{x_x^*}:= \{\,y \in [0,1] : |y - x_x^*| \le |y - c| \ \forall c \in S\,\}.
\]
These sets form a countable partition 
\([0,1] = \bigcup_{k=0}^{\infty} V_k\), with
\[
   V_1 = \Bigl[ \tfrac34,\, 1 \Bigr], \quad
   V_k= \Bigl[ \tfrac{1/k + 1/(k+1)}{2},\ \tfrac{1/k + 1/(k-1)}{2} \Bigr], \ k \ge 2, \quad
   V_0= \{0\}.
\]
By construction, $x \in V_{x_x^*}$ and the whole segment between $x_x^*$ and  $x$ lies in $V_{x_x^*}$.  
Moreover, $f(y) = |y - x_x^*|^2$ for all $y$  belonging the segment between $x_x^*$ and  $x$.  
Hence, for any $\lambda \in [0,1]$,
\[
   f\big(\lambda x_x^* + (1-\lambda)x\big)
   = \big|\lambda x_x^* + (1-\lambda) x - x_x^*\big|^2
   \le \lambda f(x_x^*) + (1-\lambda) f(x).
\]
Since $f(x_x^*) = 0$, the piecewise-star-convexity inequality holds.  
However, covering ${\cal C}$ by convex sets satisfying (i)–(ii) requires all sets \(V_k\) above, 
so any such cover must be infinite.

\begin{figure}[ht]
\centering
\begin{tikzpicture}[x=11cm, y=45cm]

\draw[line width=0.8pt, ->, >=latex] (-0.05,0) -- (1.05,0) node[below] {$x$};
\draw[line width=0.8pt, ->, >=latex] (0,-0.01) -- (0,0.085) node[left] {$f(x)$};

\draw[very thick, domain=0.1339285714:0.1547619048, samples=120, smooth] plot (\x, {(\x-0.1428571429)^2}); 
\draw[very thick, domain=0.1547619048:0.1833333333, samples=120, smooth] plot (\x, {(\x-0.1666666667)^2}); 
\draw[very thick, domain=0.1833333333:0.225, samples=120, smooth]       plot (\x, {(\x-0.2)^2});          
\draw[very thick, domain=0.225:0.2916666667, samples=120, smooth]       plot (\x, {(\x-0.25)^2});         
\draw[very thick, domain=0.2916666667:0.4166666667, samples=120, smooth] plot (\x, {(\x-0.3333333333)^2});
\draw[very thick, domain=0.4166666667:0.75, samples=120, smooth]        plot (\x, {(\x-0.5)^2});          
\draw[very thick, domain=0.75:1, samples=120, smooth]                   plot (\x, {(\x-1)^2});            

\draw[dashed, gray] (0.1339285714,0) -- (0.1339285714,0.08);
\draw[dashed, gray] (0.1547619048,0) -- (0.1547619048,0.08);
\draw[dashed, gray] (0.1833333333,0) -- (0.1833333333,0.08);
\draw[dashed, gray] (0.225,0) -- (0.225,0.08);
\draw[dashed, gray] (0.2916666667,0) -- (0.2916666667,0.08);
\draw[dashed, gray] (0.4166666667,0) -- (0.4166666667,0.08);
\draw[dashed, gray] (0.75,0) -- (0.75,0.08);

\foreach \xx in {0,1,0.5,0.3333333333,0.25,0.2,0.1666666667,0.1428571429,0.125} {
  \fill (\xx,0) circle[radius=1.2pt];
}

\foreach \xx/\lab in {1/{1},0.5/{\tfrac{1}{2}},0.3333333333/{\tfrac{1}{3}},0.25/{\tfrac{1}{4}}} {
  \draw[line width=0.4pt] (\xx,0) -- (\xx,-0.0025);
  \node[below=2pt, font=\scriptsize] at (\xx,0) {\(\lab\)};
}

\end{tikzpicture}
\caption{%
Plot of the function \( f(x) = \min_{c \in S} (x-c)^2 \) for  
\( S = \{0, \tfrac18, \tfrac17, \tfrac16, \tfrac15, \tfrac14, \tfrac13, \tfrac12, 1\} \).  
The black segments indicate the active arc of the lower envelope, the dashed lines mark the boundaries of \( V(x_x^*) \),  
and the solid points on the \(x\)-axis represent the centers \( c \in S \).}
\label{fig:weakstar-envelope}
\end{figure}
\end{example}

We next present a prototypical example of a piecewise star-convex function with a \emph{nonsmooth DC structure}. This model arises naturally in sparse estimation, robust quadratic fitting, and regularized control. The $\ell_1$ term is convex but nonsmooth, and it induces a sign-dependent linear perturbation of a positive-definite quadratic. By partitioning $\mathbb{R}^n$ into orthants with fixed coordinate signs, the objective reduces on each region to a strictly convex quadratic with a unique minimizer. These cell minimizers provide the reference points in Definition~\ref{def:wsc-piecewise}, which establishes piecewise star-convexity even though, globally, the problem remains nonconvex. For use in the examples below and to simplify notation, define  
\begin{equation} \label{eq:ortant}
\{-1,1\}^n= \{ s:=(s_1,\ldots,s_n)\in \mathbb{R}^n:~  s_i\in \{-1,1\},~ \forall i\in \{1,\ldots,n\}\}.
\end{equation}

\begin{example} \label{ex:l2-l1}
Let $Q\in\R^{n\times n}$ be positive definite and let $\beta>0$. Consider $f:{\mathbb R}^n \to\mathbb{R}$ given by 
\begin{equation}\label{eq:deff}
    f(x) = \tfrac12\,x^\top Q x - \beta\|x\|_1, 
    \qquad x \in \R^n .
\end{equation}
For each \(s\in \{-1,1\}^n\), define the closed orthant
\[
   \Omega_s = \bigl\{\, x\in\R^n :~ s_i x_i \ge 0 \text{ for } i=1,\ldots,n \bigr\}, \qquad \qquad
   \R^n = \bigcup_{s\in\{-1,1\}^n}\Omega_s.
\]
On each $\Omega_s$ the signs of the coordinates are fixed and
$\|x\|_1=\sum_i |x_i|=\sum_i s_i x_i = s^\top x$ for $x\in\Omega_s$. Hence the
restriction of $f$ to $\Omega_s$ is the strictly convex quadratic
\begin{equation} \label{eq:oecell}
   f_s(x) = \tfrac12\,x^\top Q x - \beta\, s^\top x, 
   \qquad x\in \Omega_s,
\end{equation}
with constant Hessian $Q$. Since $Q$ is positive definite, $f_s$ is \emph{strongly convex} on $\Omega_s$ with modulus $\lambda_{\min}(Q)$; hence the minimizer $x_s^*$ is
\emph{unique}. Considering that $f$ coincides with the strictly convex quadratic $f_s$ on each
orthant $\Omega_s$ and $x_s^*$ is the (unique) minimizer of $f$ on $\Omega_s$,
the convexity inequality
\[
   f\bigl(\lambda x_s^* + (1-\lambda)x\bigr) 
   \leq  \lambda f(x_s^*) + (1-\lambda) f(x),
   \qquad \forall\, x\in\Omega_s,\ \forall\,\lambda\in[0,1],
\]
holds. Therefore $f$ is \emph{piecewise–star–convex} on $\R^n$ with respect to the full partition $\{\Omega_s\}_{s\in\{-1,1\}^n}$. A particularly interesting case is when $Q=\alpha I_n$ with $\alpha>0$. In this case, it follows from \eqref{eq:oecell} that on each orthant $\Omega_s$ we have
\(
f_s(x)=\tfrac{\alpha}{2}\,\|x\|_2^2-\beta\,s^\top x,
\)
whose unique minimizer on $\Omega_s$ solves $\alpha x-\beta s=0$, hence
\(
x_s^*=(\beta/\alpha)\,s \in \Omega_s .
\)
Therefore all $2^n$ points $(\beta/\alpha)\,s$, with $s\in\{-1,1\}^n$, are global minimizers of the function $f$, and the minimum value is
\(
\min_{x\in\R^n} f(x)= -\,{n\beta^{2}}/{(2\alpha)},
\)
which is independent of $s$.
\end{example}

The next example shows how the minimum of convex quadratics induces a polyhedral  partition of the space, a construction closely related to Voronoi diagrams and with  applications in clustering, facility location, geometry of distance and constrained optimization, see \cite{Libertietall2014} and  \cite{Aurenhammer1991} for a comprehensive survey on this subject. This construction is also connected to Example~\ref{rk:converse-fails}.

\begin{example} \label{ex:general}
Let $Q\in\mathbb{R}^{n\times n}$ be positive definite, and let $b_1,\dots,b_m\in\mathbb{R}^n$ and $c_1,\dots,c_m\in\mathbb{R}$.  Consider the function $f:{\mathbb R}^n \to\mathbb{R}$ defined by 
\begin{equation} \label{eq:pfq}
   f(x):=\min_{1\le i\le m}\varphi_i(x), \qquad  \qquad \varphi_i(x):=\tfrac12\,x^\top Q x + b_i^\top x + c_i,\qquad i=1,\dots,m.
\end{equation} 
Consider the partition ${\cal P}:=\{{\cal C}_i:~ i=1,\ldots,m\}$ given by
\begin{equation*}
  {\cal C}_i=\bigl\{x\in\mathbb{R}^n:\ \varphi_i(x)\le \varphi_j(x)\ \ \forall j=1,\dots,m\bigr\}
  =\bigcap_{j=1}^m \bigl\{x\in\mathbb{R}^n:\ (b_i-b_j)^\top x \le c_j-c_i \bigr\},
\end{equation*}
since $\varphi_i(x)-\varphi_j(x)=(b_i-b_j)^\top x+(c_i-c_j)$. Hence each ${\cal C}_i$ is a closed convex polyhedron. By construction, we have 
\[
  f(x)=\varphi_i(x)\qquad \forall\,x\in{\cal C}_i.
\]
Because $Q$ is positive definite, each $\varphi_i$ is strongly convex; therefore $f$ coincides with a strongly convex quadratic on each cell ${\cal C}_i$. In particular, $f$ is convex on each ${\cal C}_i$, hence \emph{star--convex} there, and thus \emph{piecewise star--convex} with respect to the polyhedral partition $\{{\cal C}_i\}_{i=1}^m$.

Next, observe that $f$ admits a nonsmooth DC representation $f=g-h$ with a smooth convex component $g$ and a convex (typically nonsmooth) component $h$. Indeed,
\[
\min_{1\le i\le m}\Bigl\{\tfrac12\,x^\top Q x + b_i^\top x + c_i\Bigr\}
= \tfrac12\,x^\top Q x + \min_{1\le i\le m}\bigl\{b_i^\top x + c_i\bigr\}
= \tfrac12\,x^\top Q x - \max_{1\le i\le m}\bigl\{-b_i^\top x - c_i\bigr\}.
\]
Therefore, we have $f(x)=g(x)-h(x)$ with the components given by $g(x):=\tfrac12\,x^\top Q x$ and $h(x):=\max_{1\le i\le m}\bigl(-b_i^\top x - c_i\bigr)$.
 Clearly $g$ is $C^\infty$ and convex, while $h$ is the pointwise maximum of affine functions, hence convex, polyhedral, and Lipschitz. 
 
  A particularly relevant special case of \eqref{eq:pfq} is obtained by taking pairwise distinct points $a_1,\dots,a_m \in \mathbb{R}^n$ and setting  \(\varphi_i(x)=\tfrac12(x-a_i)^\top Q (x-a_i)\). In this case, each cell $\mathcal C_i$ contains $a_i$, and
\(
  \min_{x\in\mathcal C_i} f(x)=\min_{x\in\mathcal C_i}\varphi_i(x)=\varphi_i(a_i)=0, 
\)
with the minimizer in \({\mathcal C_i}\) given by    \(x_i^\ast=a_i\).  Therefore,  all cells share the same minimum value $0$.
\end{example}

Next we show that a basic robustness property of piecewise–star–convexity is that the
\emph{star–type inequality} is preserved by adding a convex term. 

\begin{proposition} \label{prop:pwsc-convex-closed}
Let $f:\R^n\to\R$ be piecewise-star-convex on $\C$ with respect to the partition ${\cal P}=\{\C_i\}_{i\in I}$, and let $g:\R^n\to\R$ be convex on $\C$.  Then $f+g$ is piecewise-star-convex on $\C$ with respect to ${\cal P}$.
\end{proposition}
\begin{proof}
Fix $i\in I$ and $x\in\C_i$. By Definition~\ref{def:wsc-piecewise}, there exists 
$x_x^*\in X_i^*:=\arg\min_{\C_i} f$ such that
\[
  f\bigl(\lambda x_x^*+(1-\lambda)x\bigr)
  \le \lambda f(x_x^*) + (1-\lambda) f(x),\quad \forall\,\lambda\in[0,1].
\]
Since $g$ is convex on $\C_i$, we have $g\bigl(\lambda x_x^*+(1-\lambda)x\bigr) \leq \lambda g(x_x^*) + (1-\lambda) g(x)$, for all $\lambda\in[0,1]$.  Summing these  inequalities yields the star–type inequality for $f+g$ on $\C_i$ with the same 
base point $x_x^*$. As the argument holds for every $i\in I$ and $x\in\C_i$, the claim follows.
\end{proof}
\begin{remark}
The Example~\ref{ex:l2-l1-datafit}  follows immediately from Example~\ref{ex:l2-l1} and Proposition~\ref{prop:pwsc-convex-closed}.
\end{remark}

\begin{proposition} \label{prop:affine-AT}
Let $f:\mathbb{R}^n\to\mathbb{R}$ be piecewise–star–convex on ${\cal C}$ with respect to the partition ${\cal P}=\{\mathcal{C}_i\}_{i\in I}$, where each $\mathcal{C}_i$ is nonempty convex and $\bigcup_{i\in I}\mathcal{C}_i=\mathcal{C}$. Let $A\in\mathbb{R}^{n\times n}$ be invertible and $b\in\mathbb{R}^n$. Define the affine map $\Psi:\mathbb{R}^n \to \mathbb{R}^n$ by 
\[
\Psi(z):=Az+b,\qquad z\in\mathbb{R}^n,
\]
the transformed sets  $\tilde{\mathcal{C}}:=A^{-1}(\mathcal{C}-b)$ and $\tilde{\mathcal{C}}_i:=A^{-1}(\mathcal{C}_i-b)$ and the function $\tilde f:\tilde{\mathcal{C}}\to\mathbb{R}$ by
\[
\tilde f(z):=f(\Psi(z)),\qquad z\in\tilde{\mathcal{C}}.
\]
Then $\tilde f$ is piecewise–star–convex on $\tilde{\mathcal{C}}$ with respect to
the partition $\tilde{\mathcal{P}}=\{\tilde{\mathcal{C}}_i\}_{i\in I}$.
\end{proposition}

\begin{proof}
Since the matrix $A$ is invertible, the function $\Psi$ is a bijective affine homeomorphism with inverse
$\Psi^{-1}(x)=A^{-1}(x-b)$. If $z\in\tilde{\mathcal{C}}$ then $x:=\Psi(z)=Az+b\in\mathcal{C}$,
so $\tilde f(z)=f(x)$ is well defined. Because $A^{-1}$ and the translation
$x\mapsto x-b$ are continuous linear/affine maps, each
$\tilde{\mathcal{C}}_i=A^{-1}(\mathcal{C}_i-b)$ is   convex and $\bigcup_{i}\tilde{\mathcal{C}}_i=\tilde{\mathcal{C}}$. Thus
$\tilde{\mathcal{P}}$ is a partition of $\tilde{\mathcal{C}}$. In addition, for each $i\in I$, let $X_i^*:=\arg\min_{\mathcal{C}_i} f$ and
$\tilde X_i^*:=\arg\min_{\tilde{\mathcal{C}}_i}\tilde f$.
We claim
\begin{equation}\label{eq:min-map-AT}
\tilde X_i^*=A^{-1}(X_i^*-b).
\end{equation}
Indeed, for $z\in\tilde{\mathcal{C}}_i$ write $x=\Psi(z)=Az+b\in\mathcal{C}_i$. Then
$\tilde f(z)=f(x)$ and hence
\[
\min_{z\in\tilde{\mathcal{C}}_i}\tilde f(z)=\min_{x\in\mathcal{C}_i} f(x),
\qquad
z\in\tilde X_i^*\ \Longleftrightarrow\ x=\Psi(z)\in X_i^*.
\]
Equivalently, $z=A^{-1}(x-b)$ with $x\in X_i^*$, proving \eqref{eq:min-map-AT}.

Now, fix $i\in I$ and $z\in\tilde{\mathcal{C}}_i$, and set $x:=\Psi(z)=Az+b\in\mathcal{C}_i$. Because $f$ is piecewise-star-convex on $\mathcal{C}_i$, there exists $x_x^*\in X_i^*$ such that, for all $\lambda\in[0,1]$,
\begin{equation}\label{eq:star-x}
f\big((1-\lambda)x+\lambda x_x^*\big)
\leq  (1-\lambda)f(x)+\lambda f(x_x^*).
\end{equation}
Define $z_z^*:=A^{-1}(x_x^*-b)$. By \eqref{eq:min-map-AT}, $z_z^*\in\tilde X_i^*$. Using the affinity of $\Psi$, we conclude that 
\[
\Psi\big((1-\lambda)z+\lambda z_z^*\big)
=(1-\lambda)\Psi(z)+\lambda \Psi(z_z^*)
=(1-\lambda)x+\lambda x_x^*.
\]
Therefore,
\[
\begin{aligned}
\tilde f\big((1-\lambda)z+\lambda z_z^*\big)
&= f\!\left(\Psi\big((1-\lambda)z+\lambda z_z^*\big)\right)
= f\big((1-\lambda)x+\lambda x_x^*\big) \\
&\le (1-\lambda)f(x)+\lambda f(x_x^*)
= (1-\lambda)\tilde f(z)+\lambda \tilde f(z_z^*),
\end{aligned}
\]
where the inequality uses \eqref{eq:star-x} and the identities
$f(x)=\tilde f(z)$, $f(x_x^*)=\tilde f(z_z^*)$. Hence, for every
$z\in\tilde{\mathcal{C}}_i$ there exists $z_z^*\in\tilde X_i^*$ such that the
star–type inequality holds on $\tilde{\mathcal{C}}_i$. Since $i$ was arbitrary,
$\tilde f$ is piecewise–star–convex on $\tilde{\mathcal{C}}$ with respect to
$\tilde{\mathcal{P}}$.
\end{proof}

The next result establishes a first-order inequality that holds for any 
piecewise-star-convex. This property will play a key role in the convergence 
analysis of our algorithm.

\begin{proposition}\label{pr:pscf-pwsc}
Let $f:\mathbb{R}^n\to\mathbb{R}$ be locally Lipschitz on a convex set ${\cal C}$.
Assume $f$ is piecewise–star–convex on ${\cal C}$ with respect to the partition
${\cal P}=\{\C_i\}_{i\in I}$ of Definition~\ref{def:wsc-piecewise}. 
Fix $i\in I$ and $x\in\C_i$. Then there exists $x_x^*\in X_i^*:=\arg\min_{\C_i} f$ a minimizer on the cell $\C_i$ such that
\begin{equation}\label{eq:first-order-pwsc}
    f(x_x^*)-f(x)\ \ge\ f^{\circ}(x;\,x_x^*-x)\ \ge\ v^{\top}(x_x^*-x)
    \qquad \forall\, v\in \partial_c f(x),
\end{equation}
where $f^{\circ}$ denotes Clarke’s generalized directional derivative and
$\partial_c f(x)$ the Clarke subdifferential at $x$.
\end{proposition}

\begin{proof}
By piecewise–star–convexity on $\C_i$, there exists $x_x^*\in X_i^*$ such that
\[
  f\bigl(x+\lambda(x_x^*-x)\bigr)\ \le\ (1-\lambda)f(x)+\lambda f(x_x^*),\qquad \forall\,\lambda\in[0,1].
\]
Subtract $f(x)$, divide by $\lambda>0$, and let $\lambda$ goes to $0$ to obtain
\(
  f^{\circ}(x;\,x_x^*-x)\ \le\ f(x_x^*)-f(x).
\)
By the Clarke calculus, e.g., Theorem~\ref{th:cdd},  $f^{\circ}(x;d)=\max_{v\in\partial_c f(x)} v^{\top}d$, hence 
$v^{\top}(x_x^*-x)\le f^{\circ}(x;\,x_x^*-x)$ for all $v\in\partial_c f(x)$,
which proves \eqref{eq:first-order-pwsc}.
\end{proof}
The next result shows that, under piecewise-star-convexity, every Clarke stationary  point of the constrained problem is in fact a minimizer of $f$ on the cell to which it belongs.
\begin{corollary}\label{cor:stationary-cellwise}
Let $f:\mathbb{R}^{n}\to\mathbb{R}$ be locally Lipschitz and 
piecewise-star-convex on ${\cal C}$ with respect to the partition 
${\cal P}=\{\C_i\}_{i\in I}$. 
Suppose that $\bar x\in{\cal C}$ is a  Clarke stationary for problem~\eqref{pr:main}, i.e., there exists $v\in\partial_c f(\bar x)$ such that 
\begin{equation}\label{eq:stationarity-C}
  v^{\top}(x-\bar x) \geq 0  \qquad  \forall\,x\in{\cal C}.
\end{equation}
In addition, for any $i_{*}\in I$ such that $\bar x\in{\cal C}_{i_{*}}$, then $\bar x$ is a global minimizer of $f$ on the corresponding cell ${\cal C}_{i_{*}}$, namely,
\(
   \bar x \in X_{i_{*}}^* := \arg\min_{x\in{\cal C}_{i_{*}}} f(x).
\)
In particular, this means that
\(
   f(\bar x)=\min_{x\in{\cal C}_{i_{*}}} f(x).
\)
\end{corollary}
\begin{proof}
Apply Proposition~\ref{pr:pscf-pwsc} with $x=\bar x$, $v\in\partial_c f(\bar x)$  and $i=i_{*}$ to obtain some 
$x_{\bar x}^*\in X_{i_{*}}^*$ with
\[
   f(x_{\bar x}^*)-f(\bar x) \geq v^{\top}(x_{\bar x}^*-\bar x).
\]
By condition  \eqref{eq:stationarity-C}, taking $x=x_{\bar x}^*\in\C_{i_{*}}\subset\C$, the right-hand side is nonnegative, hence we conclude that  $f(x_{\bar x}^*)\ge f(\bar x)$. Since $x_{\bar x}^*\in X_{i_{*}}^*$ minimizes $f$ on $\C_{i_{*}}$, we also have $f(x_{\bar x}^*)\le f(\bar x)$. Therefore, we have $f(x_{\bar x}^*)=f(\bar x)$, which implies that   $\bar x\in X_{i_{*}}^*$.
\end{proof}

\begin{remark}
In the piecewise setting, $X_i^*=\arg\min_{\C_i} f$ consists of minimizers of $f$ \emph{restricted to the cell} $\C_i$. Such points are global minimizers on $\C_i$, but they need not be minimizers of $f$ on the whole set ${\cal C}$ and may fail to be even local minimizers in ${\cal C}$. This is precisely the situation in which piecewise star-convexity is useful,  star-type inequalities are enforced \emph{within each cell} with respect to $X_i^*$, even though they may break down across cell boundaries. Corollary~\ref{cor:stationary-cellwise} refines the global picture by showing that Clarke stationarity implies \emph{cellwise} optimality, namely $\bar x\in X_i^*$ for the cell $\C_i$ containing $\bar x$. In particular, if that cell achieves the smallest cell minimum value, that is, if $\min_{\C_i} f=\min_{\C} f$, then $\bar x$ is a global minimizer of $f$ on ${\cal C}$.
\end{remark}

\section{Frank--Wolfe algorithm} \label{Sec:FW}
In this section, we present the  Frank--Wolfe algorithm for solving~\eqref{pr:main} under   a Lipschitz-based adaptive stepsize rule. We also establish iteration-complexity guarantees under piecewise star-convexity, see \cite{Beck2015,BeckTeboulle2009}. Our analysis shows that, for piecewise star-convex functions  with a nonsmooth DC structure, the method attains an ${\cal O}(1/k)$ rate for the function values (relative to a best cell minimum) and a corresponding ${\cal O}(1/k)$ bound for the Frank--Wolfe gap. For comparison, a related Frank--Wolfe variant for~\eqref{pr:main} was proposed in \cite{Khamaru2019}, with stepsizes based on a curvature/Lipschitz-type constant. For general nonconvex objectives, the analysis in \cite{Lacoste2016} yields an ${\cal O}(1/\sqrt{k})$ rate in terms of stationarity.

Throughout this section, we assume access to a linear optimization oracle (LO oracle) that minimizes linear functions over ${\cal C}$.
The algorithm is stated below.\\

\hrule
\begin{algorithm} {\bf Frank--Wolfe$_{{C},f:=g-h}$ algorithm} \label{dAlg:CondGdf}
\begin{footnotesize}
\begin{description}
\item[Step 0.] Select $ x^0 \in {{\cal C}}$ and  $L_0>0$. Set $k=0$.
\item [Step 1.]  Take $u^k\in \partial h(x^k)$.  Set $j:=\min\{\ell \in {\mathbb N}:~2^{\ell}L_k \geq  2L_0\}$.
\item [Step 2.]    Use an ``LO oracle" to compute an optimal solution $p^{k}$ and the optimal value ${\omega}_k$ as  follows 
\begin{equation} \label{deq:CondG}
p^{k} \in {\argmin}_{p\in {{\cal C}}} (\nabla g(x^k)-u^k)^{\T} (p-x^k),\qquad  \quad {\omega}_k:= (\nabla g(x^k)-u^k)^{\T}(p^{k}-x^k).
\end{equation}
\item[Step 3.]  If ${\omega}_k=0$, then {\bf stop}. Otherwise, compute the step size $\lambda_{j} \in (0, 1]$ as follows 
 \begin{equation}\label{deq:fixed.step}
\lambda_{j}=\mbox{min}\left\{1, \frac{|{\omega}_k|}{{2^{j}L_k}  \|p^{k}-x^k\|^2}\right\}
                 :={\argmin}_{\lambda \in (0,1]}\left \{-|{\omega}_k| \lambda+\frac{{2^{j}L_k} }{2} \|p^{k} -x^k\|^2 \lambda^2 \right \}.
\end{equation}
\item[Step 4.] If
\begin{equation}\label{deq:test}
 f(x^k+ \lambda_{j}(p^{k}-x^k)) \leq f(x^k) -|{\omega}_k| \lambda_{j}+\frac{{2^{j}L_k} }{2} \|p^{k} -x^k\|^2 \lambda_j^2, 
\end{equation}
then set $j_k=j$ and go to {\bf Step  5}. Otherwise, set $j=j+1$ and go to {\bf Step  3}.

\item[Step 5.] Set  $\lambda_k:= \lambda_{j_k}$ and  define the next iterate $x^{k+1}$ and the next approximation to the Lipschitz constant   $L_{k+1}$ as follows  
\begin{equation}\label{deq:iteration}
x^{k+1}:=x^k+ \lambda_{k}(p^{k}-x^k),  \qquad \quad L_{k+1}:=2^{j_k-1}L_k.
\end{equation}
Set $k\gets k+1$, and go to {\bf Step  1}.
\end{description}
\hrule
\end{footnotesize}
\end{algorithm}
\vspace{0.3cm}

The LO oracle provides $p^k\in{\cal C}$ and induces the direction $d^k:=p^k-x^k$, with $p^k:=p(x^k, u^k)$ as defined in in~\eqref{eq:LO}. The scalar quantity $\omega_k:=\omega(x^k, u^k)\leq 0$, where  $\omega(x^k, u^k)$ as defined in~\eqref{eq:LO} is the corresponding Frank--Wolfe gap and vanishes only at stationary points, see Proposition~\ref{prop:omega}. Therefore, the algorithm stops whenever $\omega_k=0$. From now on, we assume 
\begin{equation} \label{deq:vjn}
\omega_k<0,  \qquad  \qquad k=0, 1, \ldots.
\end{equation}
 for all $k\ge 0$, so the {\it method generates an infinite sequence $\{x^{k}\}_{k\in\mathbb{N}}\subset{\cal C}$}. In fact,  since the update of $x_{k+1}$ in ~\eqref{deq:iteration} is a convex combination of $x^{k}$ and $p^{k}$ with $\lambda_k\in(0,1]$, the convexity of ${\cal C}$ yields $x^{k}\in{\cal C}$ for every $k$. 

Next we apply  Proposition~\ref{le:DescentLemma} to show that the backtracking procedure in Step  2 and 3 terminates in finitely many trials at each iteration.
Therefore, $j_k$, $\lambda_k$, and the update $L_{k+1}$ in~\eqref{deq:iteration} are well defined.
The next proposition states this formally and records the key descent inequality used in the complexity analysis.

\begin{proposition} \label{dpr:mainDf}
For all $j\in {\mathbb N}$  such that   ${2^{j}L_k}\geq L$, the inequality \eqref{deq:test} holds. Consequently, the number  $j_k$ in {\bf Step  4}  is well defined. Furthermore,   $j_k$ is  the smallest non-negative integer satisfying   the following two conditions
  \begin{equation} \label{eq:TestLineSeachC1}
 2^{j_k}L_k \geq  2L_0, 
 \end{equation}
 \begin{equation}\label{eq:TestLineSeachC2}
 f(x^k+ \lambda_{k}(p^{k}-x^k)) \leq f(x^k) -| {\omega}_{k}|\lambda_{k}+\frac{{2^{j_k}L_k} }{2} \|p^{k} -x^k\|^2\lambda_{k}^2. 
\end{equation}
 In addition, the sequence $\{x^k\}_{k\in\mathbb{N}}$ generated by Algorithm~\ref{dAlg:CondGdf} is well defined.  And   the following inequality holds
 \begin{equation} \label{deq:testsDf}
 f(x^{k+1}) \leq f(x^k)- \frac{1}{2} |{\omega}_{k}| \lambda_k, \qquad  \qquad k=0, 1, \ldots.
\end{equation}
 \end{proposition}
  \begin{proof}
 First  we will prove that  \eqref{deq:test} holds   for  all  $j\in {\mathbb N}$ such that ${2^{j}L_k}\geq L$.  It follows from  Proposition~\ref{le:DescentLemma}  that  
 \begin{equation} \label{deq:auxfrg}
   g(x^k+ \lambda_{j}(p^{k}-x^k)) \leq g(x^k) + \nabla g(x^k)^{\T}(p^{k}-x^k) \lambda_{j} + \frac{L}{2}\|p^{k}-x^k\|^2\lambda_{j}^{2} .
\end{equation}
 On the other hand,  considering that $u^k\in \partial h(x^k)$,  it follows from Proposition~\eqref{le:cc}  that 
 \begin{equation} \label{deq:auxfrh}
 h(x^k+ \lambda_{j}(p^{k}-x^k))\geq h(x^k) + \lambda_{j} \langle u^k, p^{k}-x^k\rangle.
\end{equation}
Thus, combining \eqref{deq:auxfrg} with \eqref{deq:auxfrh} and taking into account  the equality in \eqref{deq:CondG} and \eqref{deq:vjn}, some algebraic manipulations shows that 
  \begin{equation*}  
  f(x^k+ \lambda_{j}(p^{k}-x^k)) \leq f(x^k) -|{\omega_k}| \lambda_{j} + \frac{{2^{j}L_k}}{2}\|p^{k}-x^k\|^2\lambda_{j}^{2} + \frac{L-{2^{j}L_k}}{2}\|p^{k}-x^k\|^2\lambda_{j}^{2}.
 \end{equation*}
 Hence,  it follows from the last inequality that   \eqref{deq:test} holds,  for all $j\in {\mathbb N}$  such that   ${2^{j}L_k}\geq L$, which proves the first statement. Consequently,  $j_k$   is well defined  and  is  the smallest non-negative integer satisfying    \eqref{eq:TestLineSeachC1} and \eqref{eq:TestLineSeachC2}.  Therefore,  the sequence $\{x^k\}_{k\in\mathbb{N}}$ generated by Algorithm~\ref{dAlg:CondGdf} is well defined. We  proceed  with the proof of \eqref{deq:testsDf}, by showing  the following inequality 
   \begin{equation} \label{deq:auxfr}
-|{\omega_k}| \lambda_{k} + \frac{{2^{j_k}L_k}}{2}\|p^{k}-x^k\|^2\lambda_{k}^{2}\leq   - \frac{1}{2} |{\omega_k}| \lambda_k.
\end{equation}
  For that, we note that $0\leq \lambda_{k}\geq |{\omega_k}|/{( 2^{j_k}L_k \|p^{k}-x^k\|^2)}$, from which we obtain that $ 2^{j_k}L_k \|p^{k}-x^k\|^2\leq |{\omega_k}|/{\lambda_{k}}$. Substituting this into the left hand side of \eqref{deq:auxfr} yields the inequality.
Hence, \eqref{deq:auxfr} holds. Therefore, \eqref{deq:testsDf}  follows by combining \eqref{eq:TestLineSeachC2}  with \eqref{deq:auxfr}, 
 which  concludes the proof. 
 \end{proof} 
 
\subsection{Convergence analysis} \label{Sec:StarConvexConvProp}
In this section we analyze Algorithm~\ref{dAlg:CondGdf} under Assumption~{\bf(A)}. We first show that the backtracking rule yields uniformly bounded Lipschitz estimates and a uniform lower bound on the accepted stepsizes. These bounds, combined with the sufficient decrease inequality in Proposition~\ref{dpr:mainDf}, imply monotone descent of $\{f(x^k)\}_{k\in\N}$, $\lim_{k\to\infty}\omega_k=0$, and Clarke-stationarity of every accumulation point. If $f$ is piecewise-star-convex, any such limit point is a global minimizer on its cell and a local minimizer of $f$  whenever it lies in the interior of that cell.

 For simplifying the notations we define the following constants 
\begin{equation}\label{deq:alpha}
\alpha:=2\,(L+L_0)\,\diam({\cal C})^{2}>0.
\end{equation}
The next lemma provides a uniform upper bound on the backtracking estimates produced,
and yields a convenient lower bound on the accepted stepsizes $\lambda_k$.

\begin{lemma}\label{dle:bfbwg}
Assume {\bf(A)} and let $(L_k)_{k\in\mathbb{N}}$ be generated by~\eqref{deq:iteration}.
Then
\begin{equation}\label{deq:Lk}
0<L_k\le L+L_0,\qquad k=0,1,\ldots .
\end{equation}
Moreover, the stepsizes $(\lambda_k)_{k\in\mathbb{N}}$ selected by Lipschitz-based adaptive stepsize  satisfy
\begin{equation}\label{deq:blk}
\lambda_k\ \ge\ \min\Bigl\{1,\ \frac{|{\omega}^{k}|}{\alpha}\Bigr\},\qquad k=0,1,\ldots .
\end{equation}
\end{lemma}

\begin{proof}
We first prove~\eqref{deq:Lk}. Since $L_0>0$, we have $0<L_0\le L+L_0$.
Assume that $0<L_k\le L+L_0$ for some $k\ge 0$ and recall from~\eqref{deq:iteration} that
$
L_{k+1}=2^{\,j_k-1}L_k .
$
If $j_k=0$, then $L_{k+1}=\tfrac12 L_k$, hence $0<L_{k+1}\le L_k\le L+L_0$. If $j_k\ge 1$, suppose by contradiction that $L_{k+1}>L+L_0$.
Then $2^{\,j_k-1}L_k=L_{k+1}>L$,  which implies that $2^{\,j_k-1}L_k\ge L$. By the first statement of Proposition~\ref{dpr:mainDf}, the backtracking inequality~\eqref{deq:test} holds for $j=j_k-1$. This contradicts the fact that $j_k$ is the first index accepted by the backtracking loop in Step~3.2 of Lipschitz-based adaptive stepsize procedure. Hence $L_{k+1}\le L+L_0$, and in all cases $0<L_{k+1}\le L+L_0$. By induction,~\eqref{deq:Lk} holds for all $k$. We now prove~\eqref{deq:blk}. By~\eqref{deq:fixed.step} with $j=j_k$,
\[
\lambda_k=\min\Bigl\{1,\ \frac{|\omega_k|}{2^{\,j_k}L_k\,\|p^k-x^k\|^{2}}\Bigr\}.
\]
Since $x^k,p^k\in{\cal C}$, we have $\|p^k-x^k\|\le \diam({\cal C})$ by~\eqref{eq:diam}. Moreover, from~\eqref{deq:iteration} we obtain $2^{\,j_k}L_k=2L_{k+1}$, and then~\eqref{deq:Lk} yields
$
2^{\,j_k}L_k=2L_{k+1}\le 2(L+L_0).
$
Combining these estimates gives
\[
\lambda_k
\ge
\min\Bigl\{1,\ \frac{|\omega_k|}{2(L+L_0)\,\diam({\cal C})^{2}}\Bigr\}
=
\min\Bigl\{1,\ \frac{|\omega_k|}{\alpha}\Bigr\},
\]
where $\alpha$ is defined in~\eqref{deq:alpha}. This is~\eqref{deq:blk} and the proof is concluded. 
\end{proof}

In the next theorem, we summarize the resulting convergence guarantees for Algorithm~\ref{dAlg:CondGdf}.

\begin{theorem}\label{dcr:pgfic}
Let ${\cal C}\subset\R^n$ be compact and convex, and let $\{x^k\}_{k\in\N}$ be generated by
Algorithm~\ref{dAlg:CondGdf}. Then, the following statement holds: 
\begin{itemize}
\item[(i)] The objective values sequence $\{f(x^k)\}_{k\in\N}$ is nonincreasing. Moreover, the  Frank--Wolfe gap satisfies $\lim_{k\to\infty}\omega_k=0.$
\item[(ii)] Every accumulation point $\bar x$ of  $\{x^k\}_{k\in\N}$ is Clarke-stationary for~\eqref{pr:main}; 
\item[(iii)] If, in addition, $f$ is piecewise-star-convex on ${\cal C}$ with respect to a partition ${\cal P}=\{{\cal C}_i\}_{i\in I}$,
then for any such limit point $\bar x$ and any $i_*\in I$ with $\bar x\in{\cal C}_{i_*}$ and $\bar x\in X_{i_*}^*$,
i.e., $\bar x$ is a global minimizer of $f$ on its cell ${\cal C}_{i_*}$. Furthermore, if  $\bar x \in {\rm int}\, {\cal C}_{i_*}$, where ${\rm int}\, {\cal C}_{i_*}$ denotes the interior of the set ${\cal C}_{i_*}$, then  $\bar x$ is local minimizer of $f$.
\end{itemize} 
\end{theorem}

\begin{proof}
To prove item~(i), first recall  that $\omega_k\neq 0$ for all $k$,  which implies  that an infinite sequence is generated.  By Proposition~\ref{dpr:mainDf}, the accepted stepsize satisfies the sufficient decrease inequality~\eqref{deq:testsDf}, and hence
$
0\leq \frac{1}{2} |{\omega}_{k}|\leq f(x^k)- f(x^{k+1})
$
for all $k=0,1,\ldots,$  which implies that $\{f(x^k)\}$ is monotone nonincreasing. We proceed to prove that $\lim_{k\to\infty}\omega_k=0$.
Let $\alpha>0$ be defined in~\eqref{deq:alpha}. Combining the sufficient decrease
\eqref{deq:testsDf} with the lower bound~\eqref{deq:blk}, we obtain
\begin{equation}\label{deq:ibbbc}
 \tfrac{1}{2} \min\bigl\{|{\omega}_{k}|,  \tfrac{1}{\alpha}|{\omega}_{k}|^2\bigr\} \leq f(x^k) -  f(x^{k+1}),  \qquad k=0,1,  \ldots.
\end{equation}
Since $\{f(x^k)\}_{k\in\N}$ is monotone nonincreasing and bounded below on ${\cal C}$, it converges.  Hence, we have 
$\lim_{k\to\infty} (f(x^k)-f(x^{k+1}))= 0$. Therefore,  passing to the limit in inequality ~\eqref{deq:ibbbc} yields
$
\lim_{k\to +\infty} \min\bigl\{|{\omega}_{k}|,  \tfrac{1}{\alpha}|{\omega}_{k}|^2\bigr\} =0, 
$
which implies  that $\lim_{k\to\infty}\omega_k=0$.

To prove item~(ii), let $\bar x$ be a limit point of $\{x^k\}_{k\in\N}$ and take a subsequence $\{x^{k_\ell}\}_{\ell\in\N}$ such that
$\lim_{\ell \to +\infty}x^{k_\ell}= \bar x$. Because $h$ is convex and ${\cal C}$ is compact, Proposition~\ref{pr:bsubdiff} ensures that
$\bigcup_{x\in{\cal C}}\partial h(x)$ is bounded. Hence,  $\{u^{k_\ell}\}_{\ell\in\N}$ is bounded and, passing to a further subsequence if needed,
we may assume $\lim_{\ell \to +\infty} u^{k_\ell}= \bar u$,  for some $\bar u\in\R^n$. By Proposition~\ref{pr:csubdiff}, the graph of subdifferential  $\partial h$ is closed, which implies that  $\bar u\in\partial h(\bar x)$. Now fix any $x\in{\cal C}$. Since $p^{k_\ell}$ minimizes the linear functional in~\eqref{deq:CondG}, we have
\[
(\nabla g(x^{k_\ell})-u^{k_\ell})^{\top}(x-x^{k_\ell})
\;\ge\;
(\nabla g(x^{k_\ell})-u^{k_\ell})^{\top}(p^{k_\ell}-x^{k_\ell})
=\omega_{k_\ell}.
\]
Using that  $\lim_{\ell \to +\infty} x^{k_\ell}= \bar x$, $\lim_{\ell \to +\infty}u^{k_\ell}=\bar u$, the continuity of $\nabla g$,
and $\lim_{\ell \to +\infty}\omega_{k_\ell}= 0$, we obtain that  $(\nabla g(\bar x)-\bar u)^{\top}(x-\bar x)\ge 0$,  for all $x\in{\cal C}$, 
which is precisely the Clarke-stationarity condition for~\eqref{pr:main}.

Finally, we prove item~(iii). Assume additionally that $f$ is piecewise-star-convex on ${\cal C}$ with respect to ${\cal P}=\{{\cal C}_i\}_{i\in I}$. Let $\bar x$ be any accumulation point of $\{x^k\}_{k\in\N}$. Choose an index $i_*\in I$ such that $\bar x\in{\cal C}_{i_*}$, such an index exists because ${\cal P}$ covers ${\cal C}$. Since item~(ii) has been proved, $\bar x$ is Clarke-stationary for~\eqref{pr:main}. Therefore, Corollary~\ref{cor:stationary-cellwise} applies and yields $\bar x\in X_{i_*}^*$, i.e., $\bar x$ is a global minimizer of $f$ over ${\cal C}_{i_*}$. It remains to show the local minimality statement when $\bar x\in{\rm int}\,{\cal C}_{i_*}$. Because ${\rm int}\,{\cal C}_{i_*}$ is open in $\R^n$, there exists $r>0$ such that $B(\bar x,r)\subset {\cal C}_{i_*}$. For every $y\in{\cal C}\cap B(\bar x,r)$ we have $y\in{\cal C}_{i_*}$, and since $\bar x$ minimizes $f$ on ${\cal C}_{i_*}$ it follows that $f(\bar x)\le f(y)$, for all $ y\in{\cal C}\cap B(\bar x,r)$.  Hence $\bar x$ is a local minimizer of $f$ on ${\cal C}$.
\end{proof}

We end this section by emphasizing that, compared with the schemes in \cite{BeckTeboulle2004, Khamaru2019}, the only algorithmic ingredient that differs is the stepsize selection performed by the backtracking procedure in Algorithm~\ref{dAlg:CondGdf}. In the next section, we use the resulting descent estimate to derive the corresponding convergence and iteration--complexity guarantees.

\subsection{Iteration-complexity analysis} \label{Sec:StarConvexIntComplexity}
In this section, we derive iteration--complexity bounds for the sequence $\{x^k\}_{k\in\mathbb{N}}$ generated by Algorithm~\ref{dAlg:CondGdf} when applied to the DC problem~\eqref{pr:main}. We focus on objectives of the form $f=g-h$ that are \emph{piecewise star-convex} on ${\cal C}$, where $g$ is convex and continuously differentiable with $L$-Lipschitz continuous gradient on ${\cal C}$, and $h$ is convex and possibly nonsmooth.

We begin with a structural lemma for finite convex partitions that requires only continuity of $f$ and monotonicity of the objective values.
It states that, if the function values converge to the minimum attained on some cell of the partition, then after a finite number of iterations the sequence can only visit cells whose minimum value does not exceed that limit.
This prefix--exclusion mechanism will be used later, once the descent analysis ensures that $\{f(x^k)\}_{k\in \mathbb N}$ is nonincreasing.

\begin{lemma}\label{lem:prefix-exclusion}
Let ${\cal C}\subset\mathbb{R}^n$ be compact and convex, and let
${\cal P}=\{ {\cal C}_i : i=1,\dots,m\}$ be a finite partition of ${\cal C}$ into nonempty convex sets.
Let $f:\mathbb{R}^{n}\to\mathbb{R}$ be such that $f_i^*:=\min_{x\in{\cal C}_i} f(x)$ is well defined for every $i$.
Let $\{x^k\}_{k\in\mathbb{N}}\subset{\cal C}$ be  a  sequence  satisfying $f(x^{k+1})\le f(x^k)$ for all $k$, and suppose that, for some
$i_*\in\{1,\dots,m\}$,
\begin{equation}\label{eq:limit-f-seq-lemma}
  \lim_{k\to\infty} f(x^k) \;=\; f_{i_*}^* .
\end{equation}
Then there exists $N\in\mathbb{N}$ such that, for all $k\ge N$ and every index $i_k\in \{1,\dots,m\}$ with $x^k\in{\cal C}_{i_k}$,
\[
  f_{i_k}^* \;\le\; f_{i_*}^* .
\]
\end{lemma}

\begin{proof}
Define
$
\hat I:=\{\, i\in\{1,\dots,m\}:\ f_i^*>f_{i_*}^* \,\}.
$
If $\hat I=\emptyset$, then  the conclusion holds  by taking  $N:=0$. Assume $\hat I\neq\emptyset$ and fix $i\in\hat I$.
By \eqref{eq:limit-f-seq-lemma}, since $f_i^*>f_{i_*}^*$, there exists $N_i\in\mathbb{N}$ such that
$
f(x^{N_i})<f_i^*.
$
Because the sequence $\{f(x^k)\}_{k\in\mathbb{N}}$ is nonincreasing, it follows that
\[
f(x^k)\le f(x^{N_i})<f_i^*,\qquad \forall\,k\ge N_i.
\]
If, for some $k\ge N_i$, we had $x^k\in{\cal C}_i$, then by definition of $f_i^*$ as the minimum of $f$ on ${\cal C}_i$
we would have $f(x^k)\ge f_i^*$, contradicting the strict inequality above. Hence,
\[
x^k\notin{\cal C}_i,\qquad \forall\,k\ge N_i.
\]
Now set $N:=\max_{i\in\hat I} N_i$, finite because $\hat I$ is finite. Then for every $k\ge N$ we have
$x^k\notin{\cal C}_i$ for all $i\in\hat I$. Therefore, if $x^k\in{\cal C}_{i_k}$ with $k\ge N$, necessarily
$i_k\notin\hat I$, which means $f_{i_k}^*\le f_{i_*}^*$, and the desired inequality is proved. 
\end{proof}

We now state a complexity result showing that, despite nonconvexity, piecewise–star–convex objectives achieve the same Frank–Wolfe iteration bound as in the convex setting.

\begin{theorem} \label{th:fcr}
Let ${\cal C}\subset\mathbb{R}^n$ be compact and convex, and let ${\cal P}=\{ {\cal C}_i : i=1,\dots,m\}$ be a finite partition of ${\cal C}$ into nonempty convex sets. Assume that $f:\mathbb{R}^{n}\to\mathbb{R}$ is piecewise-star-convex on ${\cal C}$. For each $i\in\{1,\dots,m\}$, let $X_i^*$ denote the set of global minimizers of $f$ on ${\cal C}_i$, and let
\[
   f_i^* := \min_{x\in {\cal C}_i} f(x)
\]
be the corresponding minimum value. Let $\{x^k\}_{k\in\mathbb{N}}$ be the sequence generated by Algorithm~\ref{dAlg:CondGdf}. Then there exist $i_{*} \in \{1,\dots,m\}$ and $N\in {\mathbb N}$ such that:
\begin{itemize}
\item[(i)] 	$f(x^k)-f_{i_{*}}^*  \leq \displaystyle   \frac{4(L+L_0)\diam({\cal C})^2}{k-N+1}$, for all  $k\geq N$;
\item[(ii)] $\displaystyle \min_{\ell \in \left\{ \lfloor {k-N+1}/{2}\rfloor+N+1,...,k \right\} } |{\omega}_{\ell}| \leq \frac{16(L+L_0)\diam({\cal C})^2}{k-N-1}$, for all $ k\geq N+2$. 
\end{itemize}
\end{theorem}

\begin{proof}
Using \eqref{eq:TestLineSeachC2} in Proposition~\ref{dpr:mainDf} and recalling that $\omega_k\le 0$, we obtain that 
\begin{equation}\label{eq:ascfa}
  f\!\big(x^k+\lambda_k(p^{k}-x^k)\big)
  \le
  f(x^k)-|\omega_k|\lambda_k+\frac{2^{\,j_k}L_k}{2}\|p^{k}-x^k\|^2\,\lambda_k^{2}.
\end{equation}
On the other hand, by definition of the Lipschitz-based adaptive stepsize $ \lambda_k$, we have 
\[
  \lambda_k =\operatorname*{arg\,min}_{\lambda\in(0,1]}
  \Big\{-|\omega_k|\,\lambda + \tfrac{2^{\,j_k}L_k}{2}\,\|p^{k}-x^k\|^2\,\lambda^2 \Big\}.
\]
Hence, for any $\beta_k\in(0,1]$, in particular  for $\beta_k:=\tfrac{2}{k+2}$, we conclude from the last equality that 
\[
-|\omega_k|\,\lambda_k+ \frac{2^{\,j_k}L_k}{2}\|p^{k}-x^k\|^2\,\lambda_k^{2}
\le
-|\omega_k|\,\beta_k+ \tfrac{2^{\,j_k}L_k}{2}\,\|p^{k}-x^k\|^2\,\beta_k^2.
\]
Thus, combining this previous inequality with the inequality in  \eqref{eq:ascfa}, we conclude  that 
\[
  f(x^{k+1})
  \le
  f(x^k)-|\omega_k|\,\beta_k
  +\frac{2^{\,j_k}L_k}{2}\,\|p^{k}-x^k\|^2\,\beta_k^{2}.
\]
Since $\|p^{k}-x^k\|\le \diam({\cal C})$ and $\frac{2^{\,j_k}L_k}{2}=L_{k+1}\le L+L_0$, by \eqref{deq:iteration} and Lemma~\ref{dle:bfbwg}, we obtain
\begin{equation}\label{eq:basic-rec}
  f(x^{k+1}) \le f(x^k) - |\omega_k|\,\beta_k + (L+L_0)\,\diam({\cal C})^2\,\beta_k^{2}.
\end{equation}
Set $A:=(L+L_0)\,\diam({\cal C})^2$. Since ${\cal C}$ is compact, $\{x^k\}_{k\in\mathbb{N}}$ is bounded and has accumulation points.
By Theorem~\ref{dcr:pgfic}\emph{(i)}, $\{f(x^k)\}_{k\in\mathbb{N}}$ is monotone nonincreasing and thus convergent. Let $\bar x$ be a limit point, and let
$(x^{k_\ell})_{\ell\in\mathbb{N}}$ be a subsequence such that \(\lim_{\ell \to +\infty}x^{k_\ell}={\bar x}\).
By Theorem~\ref{dcr:pgfic}\emph{(iii)}, there exists an index $i_*\in\{1,\dots,m\}$ such that $\bar x\in{\cal C}_{i_*}$ and $\bar x\in X_{i_*}^*$. Hence, we have 
\[
\lim_{k\to\infty} f(x^k)=\lim_{\ell\to\infty} f(x^{k_\ell})=f(\bar x)=f_{i_*}^*,
\qquad\text{and}\qquad
f_{i_*}^*\le f(x^k)\ \ \forall\,k\in\mathbb{N}.
\]
Now applying  Lemma~\ref{lem:prefix-exclusion} to the monotone sequence $\{f(x^k)\}$ and the finite partition ${\cal P}$.we conclude that  there exists $N\in\mathbb{N}$ such that, for every $k\ge N$ and every index $i_k$ with $x^k\in{\cal C}_{i_k}$,
\begin{equation}\label{eq:cell-min-comp}
f_{i_k}^*\le f_{i_*}^*.
\end{equation}
Fix $k$ and choose $i_k$ with $x^k\in{\cal C}_{i_k}$. Select $x_k^*\in X_{i_k}^*$ such that $f(x_k^*)=f_{i_k}^*$.
Since $f$ is piecewise-star-convex on ${\cal C}$, Proposition~\ref{pr:pscf-pwsc} and Theorem~\ref{subdif_DC} yield
\begin{equation}\label{eq:apppsc}
 f_{i_k}^*-f(x^k) \ge (\nabla g(x^k)-u^k)^{\T}(x^*_{k} - x^k),
\end{equation}
for some $u^k\in\partial h(x^k)$. Combining \eqref{eq:cell-min-comp} with \eqref{eq:apppsc}, we obtain, for all $k\ge N$, that 

\begin{equation}\label{eq:apppscc}
 f_{i_*}^*-f(x^k) \ge (\nabla g(x^k)-u^k)^{\T}(x^*_{k} - x^k).
\end{equation}
Moreover, since $p^k$ solves the LO oracle \eqref{deq:CondG}, we conclude that for every $x\in{\cal C}$, there holds 
\[
(\nabla g(x^k)-u^k)^{\T}(x-x^k)\ \ge\ (\nabla g(x^k)-u^k)^{\T}(p^k-x^k)=\omega_k.
\]
Applying this with $x=x_k^*$ and using \eqref{eq:apppscc}, we obtain $0 \ge f_{i_*}^*-f(x^k) \ge \omega_k$ for all $k\ge N$. Hence, 
\begin{equation}\label{eq:a-le-b}
  0 \le f(x^k)-f_{i_*}^* \le |\omega_k| \qquad \forall\,k\ge N.
\end{equation}
For each integer ${m}\geq 1$, define
\begin{equation} \label{eq:auxabb}
  a_{m} := f\!\big(x^{N+{m}-1}\big) - f_{i_*}^*,\qquad
  b_{m} := \big|\omega_{\,N+{m}-1}\big|,\qquad
  \beta_{m} := \frac{2}{{m}+1}.
\end{equation} 
Applying \eqref{eq:basic-rec} with $A:=(L+L_0)\,\diam({\cal C})^2$,  $k=N+{m}-1$ and $\beta_k=\beta_{m}$, we get for all ${m}\ge 1$,
\[
  a_{{m}+1}\le a_{m} - b_{m}\,\beta_{m} + A\,\beta_{m}^2.
\]
Together with \eqref{eq:a-le-b}, we have $0\le a_{m}\le b_{m}$ for all ${m}\ge 1$.
Therefore the hypotheses of Lemma~\ref{lemma taxa} apply, yielding
\[
  a_{m} \le \frac{4A}{{m}}, \qquad \forall m\geq 1, \qquad \qquad \min_{\tau\in\{\lfloor {m}/2\rfloor+2,\ldots,{m}\}} b_\tau \le \frac{16A}{{m}-2} \qquad \forall m\geq 3.
\]
Returning to $k=N+{m}-1$ and considering  \eqref{eq:auxabb}, we conclude that for every $k\ge N$, there holds 
\[
  f(x^k)-f_{i_*}^* = a_{k-N+1}\le \frac{4A}{k-N+1},
\]
and for every $k\ge N+2$, there holds 
\[
  \min_{\ell\in\{\lfloor (k-N+1)/2\rfloor+N+1,\ldots,k\}} |\omega_\ell|
  = \min_{\tau\in\{\lfloor (k-N+1)/2\rfloor+2,\ldots,k-N+1\}} b_\tau
  \le \frac{16A}{k-N-1}.
\]
Since $A=(L+L_0)\diam({\cal C})^2$, these are exactly the bounds in items \emph{(i)}--\emph{(ii)}.
\end{proof}

The next remark rewrites the estimates of Theorem~\ref{th:fcr} in an explicit $\varepsilon$--complexity form.
In particular, it provides concrete iteration thresholds ensuring $\varepsilon$--accuracy for the objective residual, with respect to the limiting cell minimum, and for the Frank--Wolfe optimality certificate.

\begin{remark}\label{rk:eps-complexity}
Under the assumptions of Theorem~\ref{th:fcr}, there exist $i_*\in\{1,\ldots,m\}$ and $N\in\mathbb{N}$ such that the
following $\varepsilon$-complexity estimates hold. Given $\varepsilon>0$, any index $k$ satisfying
\[
k\ \ge\ N-1+\frac{4}{\varepsilon}(L+L_0)\diam({\cal C})^2
\]
guarantees
$
f(x^k)-f_{i_*}^*\ \le\ \varepsilon.
$
On the other hand, given $\varepsilon>0$, any index $k$ satisfying the following condition 
\[
k\ \ge\ N+1+\frac{16}{\varepsilon}(L+L_0)\diam({\cal C})^2
\]
guarantees the existence of an iterate index
\[
\ell\in\Bigl\{\bigl\lfloor \tfrac{k-N+1}{2}\bigr\rfloor+N+1,\ldots,k\Bigr\}
\qquad\text{such that}\qquad
|\omega_\ell|\ \le\ \varepsilon.
\]
Equivalently, after $k$ iterations one can extract, from the last $\mathcal{O}(k)$ iterates, an $\varepsilon$-certificate
for the Frank--Wolfe gap. In practice, we do not attempt to estimate $N$. Instead, we use the computable Frank--Wolfe gap as the stopping
criterion and terminate as soon as $|\omega_k|\le \varepsilon$ or, more robustly, when
$\min_{0\le t\le k}|\omega_t|\le \varepsilon$. Theorem~\ref{th:fcr} then guarantees that such a threshold will be reached
after a finite number of iterations and provides an $\mathcal{O}(1/\varepsilon)$ worst--case bound, up to the transient
index shift encoded by $N$.
\end{remark}

To orient the reader, we record a simple situation in which the complexity bounds of Theorem~\ref{th:fcr} apply from the first iterate, that is, one may take $N=0$.

\begin{remark}
If all cellwise minimum values coincide, i.e., $f_i^*=f_j^*$ for every $i,j$, then in Theorem~\ref{th:fcr} one may take $N=0$.
Indeed, no cell has a strictly larger minimum value, hence the index set $\hat I=\{\, i:\ f_i^*>f_{i_*}^* \,\}$ introduced in Lemma~\ref{lem:prefix-exclusion} is empty. Therefore, the exclusion step is not needed, and the comparison argument leading to \eqref{eq:apppscc} holds for every $k$ without any initial index shift. Consequently, the complexity bounds in items (i)--(ii) apply from the first iterate. In particular, this situation covers the globally star--convex case on ${\cal C}$, a single--cell partition, for which the unique cell is ${\cal C}$ itself and $f_1^*=f^*_{\cal C}$. Hence, Theorem~\ref{th:fcr} reduces to the standard convex--type $\mathcal{O}(1/k)$ guarantees for the objective residual $f(x^k)-f^*_{\cal C}$ and for the gap certificate $|\omega_k|$.
\end{remark}

Theorem~\ref{th:fcr} shows that, under piecewise--star--convexity, the Frank--Wolfe method equipped with our backtracking stepsize achieves the convex-case rate, namely, $f(x^k)-f^\star=\mathcal{O}(1/k)$, and the optimality certificate $|\omega_k|$ decays at the same order.
Thus, the proposed projection-free scheme enjoys the standard sublinear complexity guarantees while remaining compatible with the piecewise--star--convex framework.

\section{Conclusions} \label{sec:conclusions}
We analyzed Algorithm~\ref{dAlg:CondGdf} within the DC piecewise--star--convex framework.
In particular, for functions  that are piecewise--star--convex on a compact convex set, we showed that the proposed Frank--Wolfe scheme equipped with the backtracking stepsize attains the same sublinear iteration complexity as in the convex case, namely,  the function-value error (measured against the appropriate cellwise minimum) and the Frank--Wolfe gap both decay on the order of $1/k$. These results extend convex-type complexity guarantees to a broad class of nonconvex and potentially nonsmooth DC objectives, complementing recent rates established for globally star--convex functions.
Natural directions for further work include, identifying verifiable conditions under which the iterates enter (and remain in) a single cell so that the bounds hold from the first iteration,  refining the analysis to better quantify the transient phase before the ``active'' cell is identified, and extending the guarantees to common Frank--Wolfe enhancements such as away--steps and pairwise steps, as well as other line-search mechanisms.

\section*{Funding}
The first and third authors were supported by the Australian Research Council (ARC), Solving hard Chebyshev approximation problems through nonsmooth analysis (Discovery Project DP180100602). The second author was supported in part by CNPq - Brazil Grants 304666/2021-1 and Fapesc - TR nº 2024TR002238. This work was partially conducted while the second author visited the first and third authors at Deakin University in November 2022, with thanks to the host institution for funding and providing a stimulating scientific environment.

\bibliographystyle{habbrv}
\bibliography{FrankWolfeMethod}

\end{document}